\newcommand{\blue}{\color{blue}}
\newcommand{\red}{\color{red}}
\newtheorem{Lemma}{Lemma}[section]
\newtheorem{Theorem}[Lemma]{Theorem}
\newtheorem{Proposition}[Lemma]{Proposition}
\newtheorem{Corollary}[Lemma]{Corollary}
\theoremstyle{definition}
\newtheorem{Definition}[Lemma]{Definition}
\newtheorem{Remark}[Lemma]{Remark}
\newtheorem{Example}[Lemma]{Example}
\numberwithin{equation}{section}
\newcommand{\sps}{\makebox[20pt]{\ }}
\title{Noncommutative frieze patterns with coefficients}
\author{Michael~Cuntz}
\address{Michael Cuntz, Leibniz Universit\"at Hannover,
Institut f\"ur Algebra, Zahlentheorie und Dis\-krete Mathematik,
Fakult\"at f\"ur Mathematik und Physik,
Welfengarten 1,
D-30167 Hannover, Germany}
\email{cuntz@math.uni-hannover.de}
\urladdr{https://www.iazd.uni-hannover.de/de/cuntz}
\author{Thorsten~Holm}
\address{Thorsten Holm, Leibniz Universit\"at Hannover,
Institut f\"ur Algebra, Zahlentheorie und Dis\-krete Mathematik,
Fakult\"at f\"ur Mathematik und Physik,
Welfengarten 1,
D-30167 Hannover, Germany}
\email{holm@math.uni-hannover.de}
\urladdr{https://www.iazd.uni-hannover.de/de/holm}
\author{Peter J{\o}rgensen}
\address{Peter J{\o}rgensen, 
Department of Mathematics,
Aarhus University,
Ny Munkegade 118,
8000 Aarhus C,
Denmark}
\email{peter.jorgensen@math.au.dk}
\urladdr{https://sites.google.com/view/peterjorgensen}
\keywords{Frieze pattern, tame frieze pattern, quiddity cycle, quasideterminants,
noncommutative polygons}
\subjclass[2020]{05E99, 13F60, 51M20}
\begin{document}

\begin{abstract}
Based on Berenstein and Retakh's notion of noncommutative polygons
\cite{BR18}  
we introduce and study noncommutative frieze patterns. 
We generalize several notions and fundamental properties from the classic 
(commutative) frieze patterns to noncommutative frieze patterns,
e.g.\ propagation formulae and $\mu$-matrices, quiddity cycles and reduction formulae,
and we 
show that local noncommutative exchange relations and local triangle relations 
imply all noncommutative exchange relations and triangle relations. Throughout, we allow coefficients, so 
we obtain generalizations of results from our earlier paper \cite{CHJ20}
from the commutative to the noncommutative setting. 
\end{abstract}

\maketitle

\section{Introduction}

Frieze patterns are certain arrays of numbers introduced by Coxeter \cite{Cox71} and studied further by Conway and Coxeter \cite{CC73} in the 1970's. 
They are closely linked to Fomin and Zelevinsky's cluster algebras and hence form a currently very active research area connecting topics like combinatorics,
geometry, number theory, representation theory and integrable systems.

In their paper {\em Noncommutative marked surfaces} \cite{BR18}, Berenstein and Retakh introduce 
to each marked surface $\Sigma$ a noncommutative algebra $A_{\Sigma}$ generated by 
noncommutative geodesics between marked points. The relations in this algebra are certain
triangle relations and noncommutative versions of Ptolemy relations. Berenstein and Retakh 
show that these algebras have similar properties as Fomin and Zelevinsky's
cluster algebras, e.g.\ there is a noncommutative Laurent phenomenon in $A_{\Sigma}$ for
any triangulation of the surface $\Sigma$. 
See also the paper \cite{GK21} by Goncharov and Kontsevich for the construction of a large class of noncommutative 
cluster algebras. 

In this paper we show that Berenstein and Retakh's notion of noncommutative polygons 
can be used to define noncommutative frieze patterns and to develop a general theory of these. 
It is well-known that (commutative) frieze patterns of height $n$ can be seen as friezes
on $(n+3)$-gons, i.e.\ as maps assigning values to each (undirected) diagonal of the polygon such that 
for every pair of crossing diagonals the (commutative) Ptolemy relation is satisfied.

In the noncommutative setting, a crucial difference in the combinatorial model
is that for each pair of vertices 
of the polygon there are two directed diagonals to which values are assigned. The relations 
suggested by Berenstein and Retakh are noncommutative exchange relations, or alternatively 
noncommutative Ptolemy relations, and, as a truly noncommutative
feature, triangle relations. See Section \ref{sec:ncpr} for details on noncommutative polygons,
including all the relevant definitions from \cite{BR18}. 

In the classic commutative case, frieze patterns are defined as certain arrays of numbers such that
each $2\times 2$-submatrix has determinant 1. This can be seen as certain local relations,
for quadrangles whose vertices are pairs of consecutive vertices of the polygon, and with 
all boundary entries equal to 1. For the noncommutative case, we present a generalization of
such local relations in Section \ref{sec:local}. 
Berenstein and Retakh's noncommutative polygons then naturally lead to a notion of noncommutative 
frieze pattern which we introduce and discuss in Section~\ref{sec:friezepattern}. 
A noncommutative frieze pattern over a ring $R$ is an array of invertible elements of $R$ of the form
$$\begin{array}{ccccccccccccc}
0~~ & c_{0,1} & c_{0,2} & c_{0,3}~~~ & \ldots & c_{0,m-1} & 0 & & & & & &\\
~~ \\
& 0 & c_{1,2} & c_{1,3} & \ldots & c_{1,m-1} &c_{1,0} & 0 & & & & &\\
~~ \\
& & 0 & c_{2,3} & \ldots & c_{2,m-1} & c_{2,0} & c_{2,1} & ~~~~~0 & & & &\\
~~\\
& &  & \ddots  &  \ddots & \vdots & \vdots &  \vdots & \ddots & \ddots & & & \\
~~\\
& & &  & \ddots & c_{m-2,m-1} & c_{m-2,0} & c_{m-2,1} & \ldots  & c_{m-2,m-3} & 0 & & \\
~~\\
& & & &  &   0 & c_{m-1,0} &  c_{m-1,1} & \ldots & \ldots &  
c_{m-1,m-2} & ~~~~~~~~0 &  \\
\end{array}
$$
such that all local triangle relations and all local noncommutative exchange relations are satisfied.
Here is an example of a noncommutative frieze pattern over the quaternions (see Example \ref{ex:friezes}):
$$
\begin{array}{cccccccccccc}
0 & ~1~ & ~~i~~ & 1 - k & -i - 2j & 1 & 0 & \sps & \sps & \sps & \sps & \sps \\
    & 0 & 1 & -2i - j & 3k & -i + j & 1 & 0 &    &    &    &   \\
    &    & 0 & 1 & i - j & k & i & 1 & 0 &    &    &   \\
    &    &    & 0 & 1 & j & 1 + k & -2i - j & 1 & 0 &    &   \\
    &    &    &    & 0 & 1 & -i - 2j & -3k & ~i - j~ & 1 & 0 &   \\
\sps & \sps & \sps & \sps & \sps & 0 & 1 & -i + j & -k & ~j~ & ~1~ & 0
\end{array}
$$
This noncommutative frieze pattern has height 3, i.e.\ it corresponds to a noncommutative frieze on
a hexagon. Note that for all values on diagonals of length 3 we have $c_{i,i+3}\neq c_{i+3,i}$, that is, 
there is no glide symmetry (which always holds for classic commutative frieze patterns).

For classic frieze patterns, the notion of tameness is fundamental, as introduced by
Bergeron and Reutenauer \cite{BR10}. Large parts of the classic 
theory only work for tame frieze patterns. To transfer results to noncommutative frieze patterns
we need a suitable replacement. Recall that a frieze pattern is called tame if each
neighbouring $3\times 3$-submatrix has determinant 0. 

In the noncommutative case, determinants can be 
replaced by certain quasideterminants. 
For $2\times 2$-matrices this notion of quasideterminant goes back to Richardson \cite{R26} and Heyting \cite{H28}. A general theory and many fundamental properties of quasideterminants have been
established by Gelfand and Retakh \cite{GR91},\cite{GR92}.
In Section \ref{sec:quasidet} we briefly collect some of the few fundamental properties of
quasideterminants we shall need. 
We define a noncommutative frieze pattern to be tame if for 
each neighbouring $3\times 3$-submatrix $M$ the quasideterminant $|M|_{3,3}$ is equal to zero.
Later in Section \ref{sec:theoryncfriezes} we show that every noncommutative frieze pattern is tame 
in this sense. This can be seen as a noncommutative analogue of the well-known fact that 
every classic frieze pattern without zero entry is tame (see \cite[Proposition 1]{BR10}).

The main parts of this paper are Sections \ref{sec:theoryncfriezes} and \ref{sec:localimplyall} 
in which we develop a theory of noncommutative frieze patterns. This generalizes several classic results
about frieze patterns from the commutative to the noncommutative setting. As a first main result
we prove in Theorem \ref{prop:propagation} propagation formulae for the entries in a noncommutative 
frieze pattern, generalizing the result \cite[Proposition 2.10]{CHJ20} for commutative frieze patterns
with coefficients. We then give various applications of Theorem \ref{prop:propagation}, including 
the notion of noncommutative $\mu$-matrices. These are certain $2\times 2$-matrices which in the 
commutative case, are the key ingredients for the classic Conway-Coxeter theory and beyond.  
We also define noncommutative quiddity cycles and prove a reduction formula for these, see 
Proposition \ref{prop:qcreduce}. This generalizes the classic formula in Conway-Coxeter theory 
for removing/inserting 1's in the quiddity cycle, which underlies the bijection between frieze patterns 
over positive integers and triangulations of polygons \cite{CC73}.

In Section \ref{sec:localimplyall} we consider again the relations defining noncommutative frieze patterns,
namely local triangle relations and local noncommutative exchange relations (see Definition 
\ref{def:nclocal}).
Our main result 
in this section is the following (restated as Theorem \ref{thm:localimplyall}):
\medskip

\noindent
{\bf Theorem.}
{\em Let $c:\mathrm{diag}(\mathcal{P})\to R^{\ast}$ be a noncommutative frieze on a polygon $\mathcal{P}$
over a ring $R$. Then $c$ satisfies all triangle relations and all noncommutative exchange relations.}
\medskip

In other words, in a noncommutative frieze pattern, the entries satisfy many more relations than just
the local relations defining them. This gives a noncommutative analogue of the analogous well-known 
fact for classic frieze patterns.

\section{Noncommutative polygons} \label{sec:ncpr}

Let $\mathcal{P}$ be an $m$-gon (where $m\ge 3$), with vertices numbered consecutively
$0,1,\ldots,m-1$. For each pair of vertices $i,j\in\{0,1,\ldots,m-1\}$ we have two directed
diagonals, one from $i$ to $j$ and one from $j$ to $i$, see Figure \ref{fig:diag}. Note that also for consecutive 
vertices $i$ and $i+1$ we have such directed diagonals on the boundary of $\mathcal{P}$.  
Let $\mathrm{diag}(\mathcal{P})$ denote
the set of all directed diagonals in $\mathcal{P}$. 

\begin{figure} 
\begin{center}
\begin{tikzpicture}[auto]
    \node[name=s, draw, shape=regular polygon, regular polygon sides=500, minimum size=3cm] {};
    \draw[thick, -stealth] (s.corner 457) to (s.corner 200);
    \draw[thick, stealth-] (s.corner 450) to (s.corner 207);
    
    \draw[shift=(s.corner 198)]  node[below]  {{\small $j$}};
    \draw[shift=(s.corner 448)]  node[above]  {{\small $i$}};
   \end{tikzpicture}  
\end{center}
\caption{The directed diagonals in the polygon $\mathcal{P}$.}
\label{fig:diag}
\end{figure}
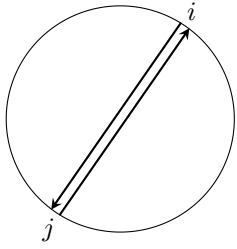

The following definition is fundamental for this paper. 

\begin{Definition}[Berenstein, Retakh \cite{BR18}]
\label{def:ncpolygon}
Let $R$ be a ring and 
$\mathcal{P}$ be an $m$-gon (where $m\ge 3$). Suppose that to each pair of vertices $i,j$ of
$\mathcal{P}$ invertible elements $c_{i,j}\in R^{\ast}$ and $c_{j,i}\in R^{\ast}$ are assigned, that is, we have a map 
$f:\mathrm{diag(\mathcal{P})}\to R^{\ast}$.
\begin{enumerate}
\item[{(a)}] The {\em triangle relation} for distinct vertices $i,j,k$ of $\mathcal{P}$ is the equation
\begin{equation} \label{eq:triangle}
c_{i,j}c_{k,j}^{-1}c_{k,i} = c_{i,k}c_{j,k}^{-1}c_{j,i}.
\end{equation}
See Figure \ref{fig:triangle}.
\item[{(b)}] Consider a quadrangle with vertices $i,j,k,\ell$ in $\mathcal{P}$ such that $(i,k)$ and $(j,\ell)$
are the diagonals of the quadrangle.
The {\em noncommutative exchange relation} for the diagonal $(k,i)$ is the equation
\begin{equation} \label{eq:ncexchange}
c_{k,i} = c_{k,\ell}c_{j,\ell}^{-1}c_{j,i}+ c_{k,j} c_{\ell,j}^{-1}c_{\ell,i}
\end{equation}
See Figure \ref{fig:exchange}.
\end{enumerate}
\end{Definition}

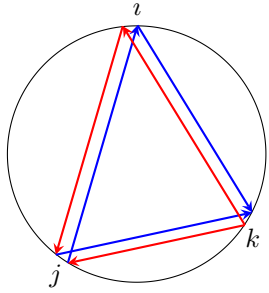
\begin{figure}
\begin{center}
\begin{tikzpicture}[auto]
    \node[name=s, draw, shape=regular polygon, regular polygon sides=500, minimum size=3.4cm] {};
    \draw[blue,thick, stealth-] (s.corner 1) to (s.corner 207);
    \draw[red,thick, -stealth] (s.corner 10) to (s.corner 198);
    \draw[blue,thick, -stealth] (s.corner 198) to (s.corner 339);
    \draw[red,thick, stealth-] (s.corner 207) to (s.corner 330);
    \draw[blue,thick, -stealth] (s.corner 1) to (s.corner 339);
    \draw[red,thick, stealth-] (s.corner 10) to (s.corner 330);
    
    \draw[shift=(s.corner 198)]  node[below]  {{\small $j$}};
    \draw[shift=(s.corner 1)]  node[above]  {{\small $i$}};
    \draw[shift=(s.corner 320)]  node[right]  {{\small $k$}};
   \end{tikzpicture}  
\end{center}
\caption{The triangle relation ${\red c_{i,j}c_{k,j}^{-1}c_{k,i}} = {\blue c_{i,k}c_{j,k}^{-1}c_{j,i}}$.
}
 \label{fig:triangle}
\end{figure}

\begin{figure} 
\begin{center}
\begin{tikzpicture}[auto]
    \node[name=s, draw, shape=regular polygon, regular polygon sides=500, minimum size=3.4cm] {};
    \draw[blue,thick, stealth-] (s.corner 53) to (s.corner 182);
    \draw[red,thick, -stealth] (s.corner 187) to (s.corner 306);
    \draw[blue,thick, stealth-] (s.corner 50) to (s.corner 316);
    \draw[red,thick, -stealth] (s.corner 45) to (s.corner 310);
    \draw[thick, -stealth] (s.corner 185) to (s.corner 410);
    \draw[red,thick, -stealth] (s.corner 45) to (s.corner 413);
     \draw[blue,thick, -stealth] (s.corner 318) to (s.corner 406);
    
    \draw[shift=(s.corner 178)]  node[below]  {{\small $k$}};
    \draw[shift=(s.corner 50)]  node[above]  {{\small $j$}};
    \draw[shift=(s.corner 300)]  node[right]  {{\small $\ell$}};
    \draw[shift=(s.corner 420)]  node[right]  {{\small $i$}};
   \end{tikzpicture}  
\end{center}
\caption{The noncommutative exchange relation
$c_{k,i} = {\red c_{k,\ell}c_{j,\ell}^{-1}c_{j,i}} + {\blue c_{k,j} c_{\ell,j}^{-1}c_{\ell,i}}$.
}
\label{fig:exchange}
\end{figure}
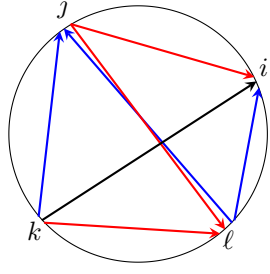

\begin{Remark} \label{rem:relations}
\begin{enumerate}
\item[{(1)}] The triangle relations (\ref{eq:triangle}) can be inverted and then take the form
$$c_{k,i}^{-1} c_{k,j}c_{i,j}^{-1} = c_{j,i}^{-1} c_{j,k} c_{i,k}^{-1}.
$$
\item[{(2)}] The triangle relation as in Figure \ref{fig:triangle} seems to single out the starting vertex $i$.
However, the other triangle relations for this triangle follow from this one. For instance, multiplying
the given triangle relation  $c_{i,j}c_{k,j}^{-1}c_{k,i} = c_{i,k}c_{j,k}^{-1}c_{j,i}$ from the left
by $c_{j,k}c_{i,k}^{-1}$ and from the right by $c_{k,i}^{-^1}c_{k,j}$ yields the triangle relation
$c_{j,k}c_{i,k}^{-1} c_{i,j} = c_{j,i} c_{k,i}^{-1} c_{k,j}$ starting at the vertex $j$.  
\item[{(3)}] 
The noncommutative exchange relation (\ref{eq:ncexchange}) seems to single out the diagonal 
$(k,i)$. However, when the triangle relations (\ref{eq:triangle}) are assumed then one can deduce from 
(\ref{eq:ncexchange}) the analogous equations with the other diagonals singled out.

For instance, multiplying (\ref{eq:ncexchange}) from the left by $c_{k,j}^{-1}$ and using the triangle 
relation for the triangle with vertices $j,k,i$ yields
$$c_{i,j}^{-1}c_{i,k}c_{j,k}^{-1}c_{j,i} = c_{k,j}^{-1}c_{k,\ell}c_{j,\ell}^{-1}c_{j,i} + c_{\ell,j}^{-1}c_{\ell,i}.
$$
Now we multiply from the left by $c_{i,j}$ and from the right by $c_{j,i}^{-1}c_{j,k}$ and get
$$c_{i,k} = c_{i,j}c_{k,j}^{-1}c_{k,\ell}c_{j,\ell}^{-1}c_{j,k} + c_{i,j}c_{\ell,j}^{-1}c_{\ell,i}c_{j,i}^{-1}c_{j,k}.
$$
Finally we use the triangle relations (in the inverted form as in (1)) for the triangles $j,k,\ell$ and $j,\ell,i$ and 
get the desired noncommutative exchange relation
$$c_{i,k} =c_{i,j}c_{\ell,j}^{-1}c_{\ell,k} + c_{i,\ell}c_{j,\ell}^{-1}c_{j,k}
$$
for the directed diagonal $(i,k)$. 

By similar computations one can also obtain the noncommutative exchange relations for the directed
diagonals $(j,\ell)$ and $(\ell,j)$ from (\ref{eq:ncexchange}). 
\item[{(4)}] For a given quadrangle with vertices $i,j,k,\ell$ there are four noncommutative
exchange relations, one for each directed diagonal in the quadrangle. We have observed in (3) that 
any of these noncommutative exchange relations implies the other three, if all triangle relations 
inside the quadrangle are assumed. Without assuming the triangle relations, one has to consider 
the four noncommutative exchange relations separately, as they can not always be deduced from each other. 
\end{enumerate}
\end{Remark}

\section{Local relations} \label{sec:local}

Based on Berenstein and Retakh's
definition of noncommutative relations on a polygon (cf.\ Definition \ref{def:ncpolygon})
we introduce in this section the notion of a noncommutative
frieze on a polygon $\mathcal{P}$ over a ring $R$ as a map $\mathrm{diag}(\mathcal{P})\to R^{\ast}$. Following 
Berenstein and Retakh's approach one could impose 
all triangle relations and all noncommutative exchange relations. Instead we will define a noncommutative
frieze by a much smaller set of relations, so-called local relations. It will then be one of the main results of 
this paper to show
that these local relations imply all triangle and all noncommutative exchange relations. 
We will achieve this goal in Section \ref{sec:localimplyall}. In this 
section we will introduce the relevant local relations and collect some first basic properties, in particular we
will show how to deduce some further relations from the local relations. 

We recall the definition of classic frieze patterns (with coefficients) \cite{Cox71}. These are arrays of numbers 
as in Figure \ref{fig:pattern} (but with $c_{i,j}=c_{j,i}$) such that the determinant of each neighbouring 
$2\times 2$-submatrix is the product of certain boundary entries. 
In the classic theory of tame frieze patterns (with coefficients)
one then shows that 
from these local relations all Ptolemy relations follow (see e.g.\ \cite[Theorem 3.3]{CHJ20}). 
We shall generalize this statement to the noncommutative setting. To this end we have to come up with a suitable
notion of local triangle relations and local noncommutative exchange relations. 
For the exchange relations this is a straightforward 
transfer from the commutative case, but for triangle relations (which do not exist in the commutative case)
this is less obvious. 

\begin{Definition}
\label{def:nclocal}
Let $R$ be a ring and 
$\mathcal{P}$ be an $m$-gon (where $m\ge 3$). Suppose that to each pair of vertices $i,j$ of
$\mathcal{P}$ invertible elements $c_{i,j}\in R^{\ast}$ and $c_{j,i}\in R^{\ast}$ are assigned, that is, we have a map 
$c:\mathrm{diag(\mathcal{P})}\to R^{\ast}$.
\begin{enumerate}
\item[{(a)}] The {\em local triangle relation} for three consecutive vertices $i,i+1+i+2$ of $\mathcal{P}$ 
is the equation
\begin{equation} \label{eq:localtriangle}
c_{i+1,i+2}c_{i,i+2}^{-1}c_{i,i+1} = c_{i+1,i}c_{i+2,i}^{-1}c_{i+2,i+1}.
\end{equation}
See Figure \ref{fig:localtriangle}.
\item[{(b)}] Consider a quadrangle with vertices $i,i+1,j,j+1$ in $\mathcal{P}$.
The {\em local noncommutative exchange relation} for the diagonal $(j,i)$ is the equation
\begin{equation} \label{eq:nclocalexchange}
c_{j,i} = c_{j,j+1}c_{i+1,j+1}^{-1}c_{i+1,i} + c_{j,i+1} c_{j+1,i+1}^{-1}c_{j+1,i}.
\end{equation}
See Figure \ref{fig:localexchange}.
There are analogous local 
noncommutative exchange relations for the other three diagonals of this quadrangle, namely:
$$
c_{i,j} = c_{i,i+1}c_{j+1,i+1}^{-1}c_{j+1,j} + c_{i,j+1} c_{i+1,j+1}^{-1}c_{i+1,j},
$$
$$
c_{i+1,j+1} = c_{i+1,i}c_{j,i}^{-1}c_{j,j+1} + c_{i+1,j} c_{i,j}^{-1}c_{i,j+1},
$$
$$
c_{j+1,i+1} = c_{j+1,j}c_{i,j}^{-1}c_{i,i+1} + c_{j+1,i} c_{j,i}^{-1}c_{j,i+1}.
$$
\item[{(c)}] A {\em noncommutative frieze} over a ring $R$ on a polygon $\mathcal{P}$ is a map $f:\mathrm{diag(\mathcal{P})}\to R^{\ast}$
satisfying all local triangle relations and all local noncommutative exchange relations (for all diagonals in the 
quadrangles with vertices $i,i+1,j,j+1$). 
\end{enumerate}
\end{Definition}

\begin{Remark} \label{rem:nclocal}
\begin{enumerate}
\item[{(1)}] We have observed in Remark \ref{rem:relations}\,(2) that the triangle relations
for a triangle with vertices $i,j,k$ do not depend on the starting point, that is, if one triangle relation for a fixed triangle
holds then all triangle relations for this triangle hold.
\item[{(2)}] For a quadrangle with vertices $i,j,k,\ell$ there are four different exchange relations, one for each directed 
diagonal of the quadrangle, see Definition \ref{def:ncpolygon}.
We have seen in Remark \ref{rem:relations}\,(3) that if one assumes 
all triangle relations inside the quadrangle 
then any of the four exchange relations implies the other three exchange relations for
this quadrangle. However, this is no longer true if one only assumes a selection of triangle relations, e.g.\ if only local triangle relations as in Definition \ref{def:nclocal} are assumed.  
\end{enumerate}
\end{Remark}

\begin{figure}
\begin{center}
\begin{tikzpicture}[auto]
    \node[name=s, draw, shape=regular polygon, regular polygon sides=500, minimum size=3.4cm] {};
    \draw[blue,thick, stealth-] (s.corner 1) to (s.corner 107);
    \draw[red,thick, -stealth] (s.corner 10) to (s.corner 98);
    \draw[blue,thick, -stealth] (s.corner 98) to (s.corner 419);
    \draw[red,thick, stealth-] (s.corner 107) to (s.corner 410);
    \draw[blue,thick, -stealth] (s.corner 1) to (s.corner 419);
    \draw[red,thick, stealth-] (s.corner 10) to (s.corner 410);
    
    \draw[shift=(s.corner 98)]  node[left]  {{\small $i+2$}};
    \draw[shift=(s.corner 1)]  node[above]  {{\small $i+1$}};
    \draw[shift=(s.corner 400)]  node[right]  {{\small $i$}};
   \end{tikzpicture}  
\end{center}
\caption{The local triangle relations
$
{\red c_{i+1,i+2}c_{i,i+2}^{-1}c_{i,i+1}} = {\blue c_{i+1,i}c_{i+2,i}^{-1}c_{i+2,i+1}}
$.}
\label{fig:localtriangle}
\end{figure}
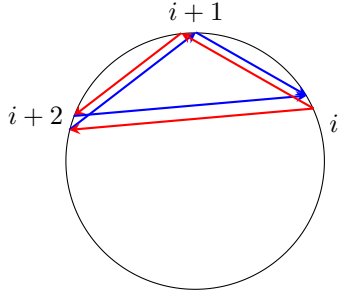

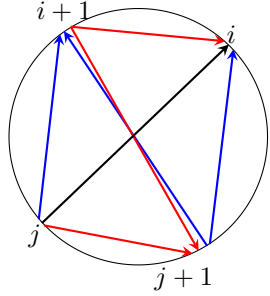
\begin{figure}
\begin{center}
\begin{tikzpicture}[auto]
    \node[name=s, draw, shape=regular polygon, regular polygon sides=500, minimum size=3.4cm] {};
    \draw[blue,thick, stealth-] (s.corner 53) to (s.corner 182);
    \draw[red,thick, -stealth] (s.corner 187) to (s.corner 286);
    \draw[blue,thick, stealth-] (s.corner 50) to (s.corner 296);
    \draw[red,thick, -stealth] (s.corner 45) to (s.corner 290);
    \draw[thick, -stealth] (s.corner 185) to (s.corner 440);
    \draw[red,thick, -stealth] (s.corner 45) to (s.corner 443);
     \draw[blue,thick, -stealth] (s.corner 298) to (s.corner 436);
    
    \draw[shift=(s.corner 178)]  node[below]  {{\small $j$}};
    \draw[shift=(s.corner 50)]  node[above]  {{\small $i+1$}};
    \draw[shift=(s.corner 280)]  node[below]  {{\small $j+1$}};
    \draw[shift=(s.corner 450)]  node[right]  {{\small $i$}};
   \end{tikzpicture}  
\end{center}
\caption{The local noncommutative exchange relations \\
$c_{j,i} = {\red c_{j,j+1}c_{i+1,j+1}^{-1}c_{i+1,i}} + {\blue c_{j,i+1} c_{j+1,i+1}^{-1}c_{j+1,i}}
$.}
\label{fig:localexchange}
\end{figure}

Let $R$ be a ring and 
$\mathcal{P}$ be an $m$-gon (where $m\ge 3$). Suppose that to each pair of vertices $i,j$ of
$\mathcal{P}$ invertible elements, values 
$c_{i,j}\in R^{\ast}$ and $c_{j,i}\in R^{\ast}$ are assigned, that is, we have a map 
$c:\mathrm{diag(\mathcal{P})}\to R^{\ast}$.

\begin{Lemma} \label{lem:localllocex}
If the values of the map $c$ satisfy all local triangle relations and all local noncommutative exchange
relations then they satisfy the triangle relations for all triangles with two consecutive vertices (i.e.\ triangles with
vertices $i,i+1,k$).
\end{Lemma}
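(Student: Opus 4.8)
The plan is to induct on the number of boundary edges separating the two consecutive vertices from the third vertex, using the local relations as the base case and propagating via the local noncommutative exchange relations. More precisely, fix the triangle with vertices $i, i+1, k$; by Remark~\ref{rem:nclocal}\,(1) it suffices to prove one triangle relation for this triple, say the one of the form~\eqref{eq:triangle} with a convenient choice of starting vertex. I would argue by induction on $k$ (reducing modulo $m$ so that $k$ ranges over the vertices ``between'' $i+1$ and $i$ going the long way around). The base cases are $k = i+2$ and $k = i-1$: for $k = i+2$ this is precisely the local triangle relation~\eqref{eq:localtriangle}, and for $k = i-1$ (equivalently, viewing the edge $(i-1, i)$ and vertex $i+1$) it is again a local triangle relation for the consecutive triple $i-1, i, i+1$, possibly after using Remark~\ref{rem:relations}\,(1)--(2) to move the starting vertex.

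For the inductive step, suppose the triangle relation holds for the triangle $i, i+1, k$ and we want it for $i, i+1, k+1$. The idea is to apply a local noncommutative exchange relation~\eqref{eq:nclocalexchange} in the quadrangle with vertices $i, i+1, k, k+1$: this expresses one of the ``long'' diagonal values, say $c_{k+1, i}$ or $c_{k+1, i+1}$, as a sum of two products, each product involving only the edge $(i, i+1)$, the edge $(k, k+1)$, the already-controlled diagonals to $k$, and the new diagonals to $k+1$. Substituting this into the candidate triangle relation for $i, i+1, k+1$ and using the induction hypothesis (the triangle relation for $i, i+1, k$) together with the local triangle relation for the consecutive triple $k, k+1$ and whichever of $i, i+1$ is adjacent, the desired identity should reduce to a tautology. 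One has to be careful about which diagonals count as ``already known'': the induction should be set up so that at stage $k$ we know all triangle relations $i, i+1, k'$ for $k'$ between $i+2$ and $k$, and each exchange step only introduces the single new vertex $k+1$, so the exchange relation has exactly one genuinely new term on each side.

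The main obstacle I anticipate is bookkeeping in the noncommutative substitution: because the entries do not commute, one cannot freely rearrange the products coming out of the exchange relation, so the exact form of the triangle relation (which starting vertex, and whether to use~\eqref{eq:triangle} or its inverted form from Remark~\ref{rem:relations}\,(1)) must be chosen so that the factors line up correctly when~\eqref{eq:nclocalexchange} is plugged in. I would handle this by first writing out the target identity for $i, i+1, k+1$ with starting vertex chosen to be $i$ (or $i+1$, whichever makes the edge $(i, i+1)$ appear on the outside), then substituting the exchange expression for the diagonal incident to $k+1$ and $k$, and checking that the two resulting terms match the two sides of the induction hypothesis after one application of the local triangle relation at $k, k+1$. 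A secondary subtlety is that the polygon is cyclic, so one must make sure the induction ``meets in the middle'' consistently from both base cases $k = i+2$ and $k = i-1$ without a contradiction; this is automatic once each individual triangle relation has been established, since the statement is about all such triangles, not about a global compatibility.
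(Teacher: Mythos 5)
Your overall strategy coincides with the paper's: induct on the distance from the third vertex $k$ to the edge $(i,i+1)$, take the local triangle relation \eqref{eq:localtriangle} as the base case, and perform the inductive step by substituting the local noncommutative exchange relations of the quadrangle $(i,i+1,k,k+1)$ into the target identity. (The paper runs the induction one-sidedly over $k=i+2,i+3,\ldots$ and invokes symmetry rather than using your two base cases; that difference is cosmetic.)

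There is, however, a genuine gap in your inductive step. You propose to close it using ``the local triangle relation for the consecutive triple $k,k+1$ and whichever of $i,i+1$ is adjacent,'' and you explicitly set up the induction hypothesis to contain only the triangle relations $(i,i+1,k')$ for the \emph{fixed} edge $(i,i+1)$. For $k>i+2$ neither $i$ nor $i+1$ is adjacent to $k$ or $k+1$, so the triangle $(i,k,k+1)$ (resp.\ $(i+1,k,k+1)$) is \emph{not} a local triangle, and its triangle relation is neither assumed nor yet proved in your setup. These auxiliary relations are genuinely needed: after inserting the exchange relations for $c_{i,k}$ and $c_{k,i}$ from the quadrangle $(i,i+1,k,k+1)$, the terms only recombine into the triangle relation for $(i,i+1,k+1)$ once one rewrites an expression such as $c_{k+1,i+1}^{-1}c_{k+1,k}c_{i+1,k}^{-1}$ via the inverted triangle relation of the triangle $(i+1,k,k+1)$. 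The fix, which is what the paper does, is to strengthen the induction hypothesis so that it ranges over \emph{all} boundary edges simultaneously: at stage $d$ one assumes the triangle relation for every triangle whose third vertex lies at distance at most $d$ from its pair of consecutive vertices. The triangles $(i+1,k,k+1)$ and $(i,k,k+1)$, based at the edge $(k,k+1)$ with third vertex at distance at most $d$, are then covered, and the step closes. As written, with the edge $(i,i+1)$ fixed throughout, your inductive step cannot be completed from the local relations alone.
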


\begin{proof}
We proceed by induction on $k$, where by symmetry we can assume that the vertex $k$ takes values
$i+2,i+3,\ldots$ (that is, we induct on the distance of $k$ to the edge with vertices $i,i+1$). 

If $k=i+2$, then the required triangle relation is a local triangle relation 
(cf.\ Definition \ref{def:nclocal}), and it is satisfied by assumption. 

For the inductive step we consider a triangle with vertices $i,i+1,k+1$. By inductive hypothesis we can assume
that the triangle relation holds for the triangle with vertices $i,i+1,k$. 
On the other hand, since the distance of the third vertex to the two consecutive ones is the same,
we can also assume by induction hypothesis that 
the triangle relation holds for the triangle with vertices $i+1,k,k+1$.

We need to show that 
$$ c_{i,k+1}c_{i+1,k+1}^{-1}c_{i+1,i} = c_{i,i+1}c_{k+1,i+1}^{-1}c_{k+1,i}.
$$
By assumption, the local noncommutative exchange relations are satisfied for the diagonals in the 
quadrangle with vertices $i,i+1,k,k+1$.  In particular we have
\begin{equation} \label{eq:locex1}
c_{i,k} = c_{i,i+1}c_{k+1,i+1}^{-1}c_{k+1,k} + c_{i,k+1}c_{i+1,k+1}^{-1} c_{i+1,k}
\end{equation}
and
\begin{equation} \label{eq:locex2}
c_{k,i} = c_{k,k+1}c_{i+1,k+1}^{-1}c_{i+1,i} + c_{k,i+1}c_{k+1,i+1}^{-1} c_{k+1,i}.
\end{equation}
Then we get
\begin{equation*}
c_{i,k+1}c_{i+1,k+1}^{-1}c_{i+1,i} \stackrel{(\ref{eq:locex1})}{=} 
c_{i,k}c_{i+1,k}^{-1}c_{i+1,i} - c_{i,i+1}c_{k+1,i+1}^{-1}c_{k+1,k}c_{i+1,k}^{-1}c_{i+1,i}. 
\end{equation*}
On the right hand side we apply the induction hypothesis (for the triangles with vertices $i,i+1,k$ and
$i,k,k+1$, respectively) and obtain
\begin{eqnarray*}
c_{i,k+1}c_{i+1,k+1}^{-1}c_{i+1,i} & = &  c_{i,i+1}c_{k,i+1}^{-1}c_{k,i} - 
c_{i,i+1}c_{k,i+1}^{-1} c_{k,k+1}c_{i+1,k+1}^{-1} c_{i+1,i} \\
& = & c_{i,i+1}(c_{k,i+1}^{-1}c_{k,i} - c_{k,i+1}^{-1} c_{k,k+1}c_{i+1,k+1}^{-1} c_{i+1,i}) \\
& \stackrel{(\ref{eq:locex2})}{=} & 
c_{i,i+1} c_{k+1,i+1}^{-1} c_{k+1,i}.
\end{eqnarray*}
This is the triangle relation for the triangle with vertices $i,i+1,k+1$, as required for the inductive step.
\end{proof}

\begin{Definition}
We call the triangle relations for triangles with two consecutive vertices {\em weak local} triangle relations.
\end{Definition}

\section{Noncommutative frieze patterns} \label{sec:friezepattern}
We transfer Berenstein-Retakh's notion of a noncommutative polygon (see Section \ref{sec:ncpr})
to the setting of frieze patterns. 
Recall that in the $m$-gon $\mathcal{P}$ we have for each pair of vertices $i,j$ two directed diagonals,
with corresponding values $c_{i,j}$ and $c_{j,i}$ (not necessarily equal as in the commutative case). 
These values can be arranged as usual into an array of numbers as in Figure \ref{fig:pattern}.
\begin{figure}
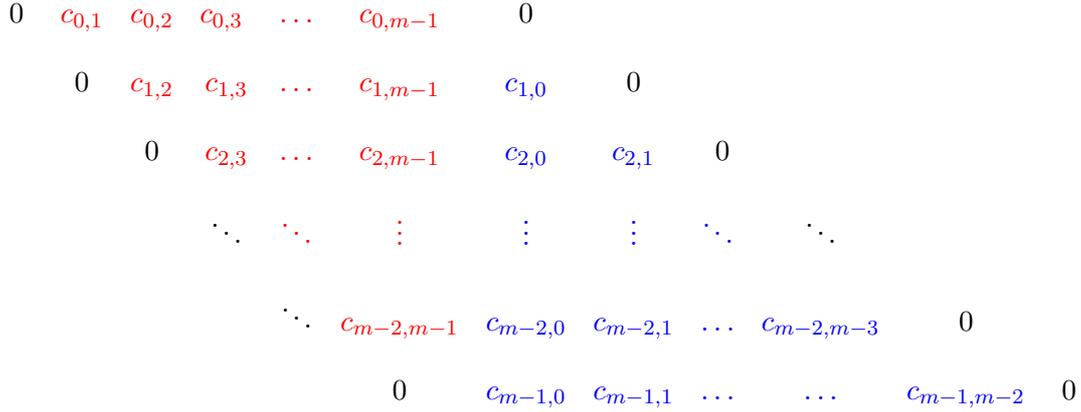
 
$$\begin{array}{ccccccccccccc}
0~~ & {\red c_{0,1}} & {\red c_{0,2}} & {\red c_{0,3}}~~~ & {\red \ldots} & {\red c_{0,m-1}} & 0 & & & & & &\\
~~ \\
& 0 & {\red c_{1,2}} & {\red c_{1,3}} & {\red \ldots} & {\red c_{1,m-1}} & {\blue c_{1,0}} & 0 & & & & &\\
~~ \\
& & 0 & {\red c_{2,3}} & {\red \ldots} & {\red c_{2,m-1}} & {\blue c_{2,0}} & {\blue c_{2,1}} & ~~~~~0 & & & &\\
~~\\
& &  & \ddots  &  {\red \ddots} & {\red \vdots} & {\blue \vdots} &  {\blue \vdots} & {\blue\ddots} & \ddots & & & \\
~~\\
& & &  & \ddots & {\red c_{m-2,m-1}} & {\blue c_{m-2,0}} & {\blue c_{m-2,1}} & {\blue \ldots}  &  {\blue c_{m-2,m-3}} & 0 & & \\
~~\\
& & & &  &   0 &  {\blue c_{m-1,0}} &  {\blue c_{m-1,1}} & {\blue \ldots} & {\blue \ldots} &  
{\blue c_{m-1,m-2}} & ~~~~~~~~0 &  \\
\end{array}
$$
\caption{A noncommutative frieze pattern}
 \label{fig:pattern}
\end{figure}
This is very reminiscent of a frieze pattern (with coefficients), as defined by Coxeter \cite{Cox71}
and later studied in numerous papers; see the survey \cite{MG15} for an overview and \cite{CHJ20}
for frieze patterns with coefficients. 
We refer to these classic frieze patterns as commutative
frieze patterns (with coefficients). 
In the commutative case, the blue region in Figure \ref{fig:pattern}
is obtained from the red region by glide reflection (since 
$c_{i,j}=c_{j,i}$), but this is no longer true in the noncommutative setting.

\begin{Definition} \label{def:ncfriezepattern}
Let $R$ be a ring. 
We call an array of invertible elements $c_{i,j}\in R^{\ast}$ as in Figure \ref{fig:pattern} 
a {\em noncommutative frieze pattern} of height $m-3$ over $R$ if all local triangle relations 
and all local noncommutative exchange relations (cf.\ Definition \ref{def:nclocal}) are satisfied.
\end{Definition}

We will sometimes have to distinguish between a noncommutative frieze pattern 
and a map $c:\mathrm{diag}(\mathcal{P})\to R^{\ast}$ on the diagonals of a polygon
which is called a noncommutative frieze (cf.\ Definition \ref{def:nclocal}).

\begin{Example} \label{ex:friezes}
\begin{enumerate}
\item[{(1)}] We consider noncommutative friezes on a triangle, with vertices $0,1,2$. The local
noncommutative exchange 
relations don't apply (since there are no quadrangles). The values on the edges of the triangle
 have to satisfy the triangle
relation 
\begin{equation} \label{eq:tritri}
c_{0,1} c_{2,1}^{-1} c_{2,0} = c_{0,2} c_{1,2}^{-1} c_{1,0}.
\end{equation}
The other triangle relations follow from this one, see Remark \ref{rem:relations}\,(2). 
Therefore, a noncommutative frieze on a triangle is given by values $c_{i,j}$ for $i,j\in \{0,1,2\}$ satisfying
the triangle relation (\ref{eq:tritri}). 
The corresponding noncommutative frieze pattern (of height 0) has the form
$$
\begin{array}{ccccccccc}
 & \ddots & &  & & & & & \\
0 & c_{0,1} & c_{0,2} & 0 & & & & & \\
& 0 & c_{1,2} & c_{1,0} & 0 & & & & \\
& & 0 & c_{2,0} & c_{2,1} & 0 & & & \\
& & & 0 & c_{0,1} & c_{0,2} & 0 & & \\
& & & & 0 & c_{1,2} & c_{1,0} & 0 &  \\
& & & & & 0 & c_{2,0} & c_{2,1} & 0  \\
& & & & &  & & \ddots &   \\
\end{array}
$$
where the entries have to satisfy the triangle relation (\ref{eq:tritri}).
As an explicit example of such a frieze pattern over the free skew field in two generators $x,y$ we can take
$c_{0,1}=x$, $c_{1,0}=x^{-1}$, $c_{0,2}=xy$, $c_{2,0}=y$, 
$c_{1,2}=y$, $c_{2,1}=yx$. Then the triangle relation reads 
$ x (yx)^{-1} y = xy y^{-1} x^{-1}$ which holds since both sides are equal to 1.
The corresponding noncommutative frieze pattern has the form
$$
\begin{array}{ccccccccc}
 & \ddots & &  & & & & & \\
0 & x & xy & 0 & & & & & \\
& 0 & y & x^{-1} & 0 & & & & \\
& & 0 & y & yx & 0 & & & \\
& & & 0 & x & xy & 0 & & \\
& & & & 0 & y & x^{-1} & 0 &  \\
& & & & & 0 & y & yx & 0  \\
& & & & &  & & \ddots &   \\
\end{array}
$$
Note that it is periodic of period 3 (because it comes from a frieze on a triangle), but there is no 
glide symmetry. 
\item[{(2)}] We give an example of a noncommutative frieze pattern of height 1 over 
the free skew field in two generators $x,y$ (the figure shows the fundamental domain which is repeated
by translation to both sides):
$$
\begin{array}{cccccccc}
 & & &  \ddots & & & & \\
0 & x & y & x & 0 & & &  \\
& 0 & x & 2xy^{-1}x & y & 0 & & \\
& & 0 & y & yx^{-1}yx^{-1}y & y & 0 & \\
& & & 0 & y & 2xy^{-1}x & x & 0  \\
& & & & & \ddots & &   \\
\end{array}
$$
\item[{(3)}] We consider the skew field of quaternions
$$\mathbb{H} = \{a+bi+cj+dk\,|\,a,b,c,d\in \mathbb{R}\}
$$
with the well-known multiplication given by the following table (and extended linearly):
$$\begin{array}{c|ccc}
 & i & j & k \\ \hline 
 i & -1 & k & -j \\
 j & -k & -1 & i \\
 k & j & -i & -1 
\end{array}
$$
Then we have the following example of a noncommutative frieze pattern over $\mathbb{H}$
of height 3 (i.e.\ it corresponds to a noncommutative
frieze on a hexagon):
$$
\begin{array}{cccccccccccc}
0 & ~1~ & ~~i~~ & 1 - k & -i - 2j & 1 & 0 & \sps & \sps & \sps & \sps & \sps \\
    & 0 & 1 & -2i - j & 3k & -i + j & 1 & 0 &    &    &    &   \\
    &    & 0 & 1 & i - j & k & i & 1 & 0 &    &    &   \\
    &    &    & 0 & 1 & j & 1 + k & -2i - j & 1 & 0 &    &   \\
    &    &    &    & 0 & 1 & -i - 2j & -3k & ~i - j~ & 1 & 0 &   \\
\sps & \sps & \sps & \sps & \sps & 0 & 1 & -i + j & -k & ~j~ & ~1~ & 0
\end{array}
$$
Note that this noncommutative frieze
has 1's on the boundary and that for all values on diagonals of length 3 we have $c_{i,i+3}\neq c_{i+3,i}$.
We leave it to the reader to check that the local triangle relations and the local noncommutative exchange
relations are satisfied.
\end{enumerate}
\end{Example}

\begin{Remark}
A noncommutative frieze (pattern) 
over a commutative set $R$ is not necessarily a classic frieze (pattern). 
For instance, let $R=\mathbb{N}$ and consider 
the frieze on a triangle with $c_{0,1}=c_{0,2}=2$ and $c_{1,2}=c_{2,1}=c_{1,0}=c_{2,0}=1$. This choice 
indeed yields a noncommutative frieze since the triangle relation 
$c_{0,1} c_{2,1}^{-1} c_{2,0} = c_{0,2} c_{1,2}^{-1} c_{1,0}$ from (\ref{eq:tritri}) is satisfied. 
However, it is not a classic frieze (with coefficients) since $c_{0,1}\neq c_{1,0}$.
\end{Remark}

\section{Quasideterminants} \label{sec:quasidet}
In the commutative setting, frieze patterns are defined by the local conditions that the determinant of
any neighbouring $2\times 2$-matrix is equal to 1 (or the product of two boundary entries for frieze patterns
with coefficients). In the noncommutative setting we will see that for 
defining noncommutative frieze patterns by local conditions the concept of {\em quasideterminants}
naturally appears. For $2\times 2$-matrices this goes back to Richardson \cite{R26} and Heyting
\cite{H28}, but the general theory and many fundamental properties of quasideterminants have been
established by Gelfand and Retakh \cite{GR91},\cite{GR92}.

We only include here the few definitions and results about quasideterminants which we shall
need later for our purposes. For more details we refer to the survey article
\cite{GGRW05}.

Let $A=(a_{i,j})$ be an $n\times n$-matrix. For any 
fixed $1\le i,j\le n$ let $A^{i,j}$ denote the matrix obtained from $A$ by deleting row $i$ and column $j$. 
The $(i,j)$-quasideterminant of $A$ is defined if the submatrix $A^{i,j}$ is invertible
and then it has the form
$$|A|_{i,j} = a_{i,j} - r_i^j (A^{i,j})^{-1} c_j^i
$$
where $r_i^j$ is the $i$th row of $A$ with $j$th entry deleted, and $c_j^i$ is the 
$j$th column of $A$ with entry $i$ deleted. 
\medskip

Of particular interest for us is the following characterisation for the vanishing of quasideterminants.

\begin{Proposition}(\cite[Proposition 1.4.6]{GGRW05}) \label{prop:qdet0}
Let $A=(a_{i,j})$ be a matrix over a division ring. If the quasideterminant $|A|_{i,j}$ is defined then the
following statements are equivalent.
\begin{enumerate}
\item[{(i)}] $|A|_{i,j}=0$.
\item[{(ii)}] The $i$th row of the matrix $A$ is a left linear combination of the other rows of $A$.
\item[{(iii)}] The $j$th column of the matrix $A$ is a right linear combination of the other
columns of $A$.
\end{enumerate}
\end{Proposition}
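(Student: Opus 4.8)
The statement is the standard vanishing criterion for a quasideterminant, and the plan is to deduce it straight from the defining formula $|A|_{i,j}=a_{i,j}-r_i^j(A^{i,j})^{-1}c_j^i$. First I would reduce to the case $i=j=n$: relabelling the rows by a permutation carrying $i$ to the last position and the columns by one carrying $j$ to the last position changes neither the vanishing of $|A|_{i,j}$ (it only reorders the entries of $r_i^j$ and $c_j^i$ and replaces $A^{i,j}$ by $PA^{i,j}Q$ with $P,Q$ permutation matrices, which is still invertible) nor the property that the $i$th row is a left linear combination of the remaining rows, nor the corresponding statement for columns.

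So I would assume $i=j=n$ and write $A=\begin{pmatrix}B&u\\v&d\end{pmatrix}$, where $B=A^{n,n}$ is invertible, $u=c_n^n$ is a column vector of length $n-1$, $v=r_n^n$ is a row vector of length $n-1$, and $d=a_{n,n}$. By definition, $|A|_{n,n}=d-vB^{-1}u$.

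For (i)$\Leftrightarrow$(iii): the last column of $A$ is $\binom{u}{d}$, and the first $n-1$ columns assemble into $\binom{B}{v}$. Saying that the last column is a right linear combination of the others means $\binom{B}{v}\lambda=\binom{u}{d}$ for some column vector $\lambda$ of length $n-1$, i.e.\ $B\lambda=u$ and $v\lambda=d$. Since $B$ is invertible, the first equation forces $\lambda=B^{-1}u$, and the second then holds if and only if $d=vB^{-1}u$, i.e.\ if and only if $|A|_{n,n}=0$. The equivalence (i)$\Leftrightarrow$(ii) is symmetric: the last row is $(v\ \ d)$ and the first $n-1$ rows assemble into $(B\ \ u)$, so a left linear combination expressing the last row in terms of the others reads $\mu B=v$ and $\mu u=d$ for a row vector $\mu$ of length $n-1$; invertibility of $B$ forces $\mu=vB^{-1}$, and $\mu u=d$ is again precisely $|A|_{n,n}=0$.

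The computation is immediate, so there is no real obstacle; the only point requiring care is keeping the two sides of the ring straight — in the row statement the coefficient vector $\mu$ acts on the left and $B$ is inverted on the right ($\mu=vB^{-1}$), whereas in the column statement $\lambda$ acts on the right and $B$ is inverted on the left ($\lambda=B^{-1}u$) — together with checking that the reduction to $i=j=n$ genuinely preserves all three conditions. Beyond the defining formula of the quasideterminant and the standing hypothesis that $A^{i,j}$ is invertible, nothing further is used.
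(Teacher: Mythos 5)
Your argument is correct. Note, however, that the paper offers no proof of this proposition at all: it is imported verbatim from \cite[Proposition 1.4.6]{GGRW05} and used as a black box (only in the proof of Corollary \ref{prop:ncqdet0}), so there is no in-paper argument to compare yours against. Your proof is a clean, self-contained verification within the paper's conventions: since the paper defines $|A|_{i,j}$ to exist precisely when $A^{i,j}$ is invertible, the block decomposition $A=\left(\begin{smallmatrix}B&u\\ v&d\end{smallmatrix}\right)$ after reduction to $i=j=n$ immediately forces the coefficient vectors $\lambda=B^{-1}u$ and $\mu=vB^{-1}$, and both (ii) and (iii) collapse to the single scalar condition $d=vB^{-1}u$, i.e.\ $|A|_{n,n}=0$. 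Two minor observations: the division-ring hypothesis in the statement is not actually used in your argument (invertibility of $A^{i,j}$ suffices, which matches the paper's notion of ``defined''; in \cite{GGRW05} the hypothesis matters because their notion of definedness is recursive and weaker); and the permutation reduction, while routine, is worth the explicit check you give that $(A^{i,j})^{-1}$ conjugates correctly against the reordered row and column vectors.
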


For later establishing fundamental properties of noncommutative frieze patterns 
we will need to consider neighbouring $3\times 3$-submatrices. For frieze patterns in the 
commutative case, the notion of tameness (introduced in \cite{BR10}) is fundamental, where a frieze pattern
is called tame if every neighbouring $3\times 3$-matrix in the frieze pattern has 
determinant 0. This is always the case when the entries in the frieze pattern are non-zero, see
\cite[Proposition 1]{BR10}.

In our noncommutative setting we will later need an analogous property when we 
want to prove propagation formulae. The determinants in the commutative case can be 
replaced by a certain quasideterminant. 
For this quasideterminant to be defined we shall need that in a noncommutative frieze pattern
every neighbouring $2\times 2$-matrix is invertible. 

We have the following general formula for the inverse of a $2\times 2$-matrix with
noncommuting entries which can be verified by a short and straightforward computation. 

\begin{Lemma} \label{lemma:inverse2by2}
For any invertible $2\times 2$-matrix $\begin{pmatrix} a & b \\ c & d \end{pmatrix}$ the inverse matrix is given 
by
$$\begin{pmatrix} -c^{-1}d(b-ac^{-1}d)^{-1} & -a^{-1}b(d-ca^{-1}b)^{-1} \\
(b-ac^{-1}d)^{-1} & (d-ca^{-1}b)^{-1}
\end{pmatrix}.
$$  
\end{Lemma}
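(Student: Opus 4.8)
The plan is to verify the claimed formula directly by multiplying the given candidate inverse against $\begin{pmatrix} a & b \\ c & d \end{pmatrix}$ and checking that the product is the identity. Since we are working over a (noncommutative) ring and $2\times 2$ matrix inverses are two-sided as soon as they are one-sided, it suffices to check, say, that the candidate matrix is a right inverse, i.e.\ that
\[
\begin{pmatrix} -c^{-1}d(b-ac^{-1}d)^{-1} & -a^{-1}b(d-ca^{-1}b)^{-1} \\ (b-ac^{-1}d)^{-1} & (d-ca^{-1}b)^{-1} \end{pmatrix}
\begin{pmatrix} a & b \\ c & d \end{pmatrix} = \begin{pmatrix} 1 & 0 \\ 0 & 1 \end{pmatrix}.
\]
(One should note at the outset that the hypothesis ``invertible'' is being used implicitly to guarantee that $a$, $c$, and the Schur-type complements $b-ac^{-1}d$ and $d-ca^{-1}b$ are all invertible; strictly this needs the standard fact that invertibility of the full matrix forces these quantities to be invertible, which is where the quasideterminant language from Section~\ref{sec:quasidet} is really doing the work.)

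First I would compute the bottom row of the product. The $(2,1)$-entry is $(b-ac^{-1}d)^{-1}a + (d-ca^{-1}b)^{-1}c$, and the $(2,2)$-entry is $(b-ac^{-1}d)^{-1}b + (d-ca^{-1}b)^{-1}d$. For the $(2,2)$-entry I would factor $(b-ac^{-1}d)^{-1}b = (b - ac^{-1}d)^{-1}(b - ac^{-1}d + ac^{-1}d) = 1 + (b-ac^{-1}d)^{-1}ac^{-1}d$, and similarly $(d-ca^{-1}b)^{-1}d = 1 + (d - ca^{-1}b)^{-1}ca^{-1}b$; then I would need the identity $(b-ac^{-1}d)^{-1}ac^{-1} = -(d-ca^{-1}b)^{-1}ca^{-1}$, which is exactly the kind of ``quasi-Plücker'' relation between the two Schur complements that one proves by clearing denominators: it is equivalent to $(d-ca^{-1}b)c^{-1} \cdot (-c)\cdot a^{-1}\cdot a = c a^{-1}(b - ac^{-1}d)$ after rearranging, i.e.\ to $-(d - ca^{-1}b)c^{-1} = c a^{-1} b \cdot (\text{something})$... more cleanly, multiply the desired identity on the left by $(b - ac^{-1}d)$ and on the right by $c$ to get $ac^{-1}c = a = -(b-ac^{-1}d)(d-ca^{-1}b)^{-1}ca^{-1}c$; I would instead establish the cleaner symmetric statement $c^{-1}d(b - ac^{-1}d)^{-1} = a^{-1}b(d - ca^{-1}b)^{-1} \cdot (\text{correction})$ by brute cross-multiplication, showing $(d - ca^{-1}b) = -ca^{-1}(b - ac^{-1}d)c^{-1}d \cdot d^{-1}\cdots$. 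In practice the slick route is: expand $b - ac^{-1}d = -ac^{-1}(d - ca^{-1}b)a^{-1}a\cdot\cdots$; concretely one checks $b - ac^{-1}d = -ac^{-1}(d - ca^{-1}b) \cdot ?$ fails homogeneity, so the correct auxiliary identity is simply $a^{-1}(b - ac^{-1}d) = a^{-1}b - c^{-1}d$ and $c^{-1}(d - ca^{-1}b) = c^{-1}d - a^{-1}b = -(a^{-1}b - c^{-1}d)$, whence $a^{-1}(b - ac^{-1}d) = -c^{-1}(d - ca^{-1}b)$. Inverting gives $(b - ac^{-1}d)^{-1}a = -(d - ca^{-1}b)^{-1}c$, which immediately kills the $(2,1)$-entry and, combined with the expansions above, collapses the $(2,2)$-entry to $1 + (b-ac^{-1}d)^{-1}ac^{-1}d + (d-ca^{-1}b)^{-1}ca^{-1}b$; the last two terms cancel by the same relation applied after right-multiplying by $c^{-1}d$ versus $a^{-1}b$ and using $a^{-1}b - c^{-1}d = \pm a^{-1}(b - ac^{-1}d)$ once more.

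Then I would do the analogous computation for the top row using the mirror-image bookkeeping (the top-left entry $-c^{-1}d(b-ac^{-1}d)^{-1}$ plays the role that $(b-ac^{-1}d)^{-1}$ played, pre-multiplied by $-c^{-1}d$), so the top row of the product equals $-c^{-1}d$ times the bottom row plus a correction, and the same two identities $a^{-1}(b - ac^{-1}d) = -c^{-1}(d-ca^{-1}b)$ and its inverse finish the job. The main obstacle — really the only subtle point — is purely notational: keeping the noncommutative factors in the right order while clearing denominators, since a slip in left/right multiplication is fatal here. The conceptual content reduces entirely to the single elementary observation that $a^{-1}(b - ac^{-1}d)$ and $-c^{-1}(d - ca^{-1}b)$ are the same element $a^{-1}b - c^{-1}d$ of the ring, so I would isolate that as a one-line sub-claim and then let everything else be ``a short and straightforward computation'' as the statement promises. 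Finally I would remark that once a right inverse is exhibited, it is automatically the two-sided inverse, so no separate left-inverse check is needed.
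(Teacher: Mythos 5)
Your overall strategy --- direct verification that the displayed matrix is an inverse --- is exactly what the paper intends; the paper offers nothing beyond ``a short and straightforward computation''. However, you chose the harder of the two one-sided checks. Multiplying in the other order,
\[
\begin{pmatrix} a & b \\ c & d\end{pmatrix}
\begin{pmatrix} -c^{-1}d(b-ac^{-1}d)^{-1} & -a^{-1}b(d-ca^{-1}b)^{-1} \\ (b-ac^{-1}d)^{-1} & (d-ca^{-1}b)^{-1}\end{pmatrix},
\]
every entry telescopes instantly: the $(1,1)$-entry is $(-ac^{-1}d+b)(b-ac^{-1}d)^{-1}=1$, the $(1,2)$-entry is $-b(d-ca^{-1}b)^{-1}+b(d-ca^{-1}b)^{-1}=0$, and similarly for the second row; no auxiliary identity is needed, and since the matrix is assumed invertible this one-sided inverse is automatically the inverse. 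Your computation in the order you chose does go through, and you correctly isolate the one identity that makes it work, $a^{-1}(b-ac^{-1}d)=a^{-1}b-c^{-1}d=-c^{-1}(d-ca^{-1}b)$, whence $(b-ac^{-1}d)^{-1}a=-(d-ca^{-1}b)^{-1}c$; but the passage leading up to it is a string of abandoned false starts that should be deleted, and your $(2,2)$-bookkeeping contains two compensating slips: the entry equals $2+(b-ac^{-1}d)^{-1}ac^{-1}d+(d-ca^{-1}b)^{-1}ca^{-1}b$, not $1+\cdots$, and those two leftover terms sum to $-1$ rather than cancelling (indeed $(b-ac^{-1}d)^{-1}ac^{-1}d=-(d-ca^{-1}b)^{-1}d$, so the sum is $-(d-ca^{-1}b)^{-1}(d-ca^{-1}b)=-1$). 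Finally, two of your side remarks are false as general statements: invertibility of the full matrix does \emph{not} force $a$, $c$, or the Schur complements to be invertible (consider $\left(\begin{smallmatrix}0&1\\1&0\end{smallmatrix}\right)$), and a one-sided inverse of a matrix over a noncommutative ring need not be two-sided. Neither does damage here, because the lemma hypothesizes that the matrix is invertible, and the paper in fact uses the statement in the converse direction: whenever all entries of the displayed matrix are defined, the two multiplications exhibit a two-sided inverse and hence establish invertibility.
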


For verifying that a certain $2\times 2$-matrix is invertible, it suffices to show that each entry in the
matrix in Lemma \ref{lemma:inverse2by2} is defined. 

For a noncommutative frieze pattern $\mathcal{C}=(c_{i,j})$ we consider a neighbouring $2\times 2$-submatrix
$$M':= \begin{pmatrix} c_{i,j-1} & c_{i,j} \\c_{i+1,j-1} & c_{i+1,j} \\
\end{pmatrix}.
$$
By Definition \ref{def:nclocal}, all entries in a noncommutative frieze pattern are invertible. The entries
of type $b-ac^{-1}d$ in Lemma \ref{lemma:inverse2by2} for the matrix $M'$ take the form
$c_{i,j}-c_{i,j-1}c_{i+1,j-1}^{-1}c_{i+1,j}$. Using the local noncommutative exchange relation for the
diagonal $(i,j-1)$ in the quadrangle with vertices $i,i+1,j-1,j$ we have
$$c_{i,j}-c_{i,j-1}c_{i+1,j-1}^{-1}c_{i+1,j} = - c_{i,i+1}c_{j,i+1}^{-1}c_{j,j-1}c_{i+1,j-1}^{-1}c_{i+1,j}
$$
and this is invertible (as each factor
is invertible by Definition \ref{def:nclocal}). Similarly, the entries
of type $d-ca^{-1}b$ for the matrix $M'$ are
$c_{i+1,j}-c_{i+1,j-1}c_{i,j-1}^{-1}c_{i,j}$ and this is equal
to $c_{i+1,i}c_{j-1,i}^{-1}c_{j-1,j}$ by the local noncommutative exchange relation for the 
diagonal $(i+1,j)$ in the quadrangle with vertices $i,i+1,j-1,j$. Again, this expression is invertible
by Definition \ref{def:nclocal}. So we have shown that every neighbouring $2\times 2$-submatrix
of a noncommutative frieze pattern is invertible. This implies that the quasideterminant appearing in the 
following definition exists.

\begin{Definition}
A noncommutative frieze pattern $\mathcal{C}=(c_{i,j})$ is called {\em tame} if
for every 
(neighbouring) $3\times 3$-submatrix 
$$M=\begin{pmatrix} c_{i,j-1} & c_{i,j} & c_{i,j+1} \\c_{i+1,j-1} & c_{i+1,j} & c_{i+1,j+1} \\
c_{i+2,j-1} & c_{i+2,j} & c_{i+2,j+1} \end{pmatrix} 
$$
the quasideterminant $|M|_{3,3}$ is equal to 0 (where indices are taken modulo $m$ for a frieze
pattern as in Figure \ref{fig:pattern}). 
\end{Definition}

We will see in Corollary \ref{prop:ncqdet0} that every noncommutative frieze pattern is tame.

\section{Theory of noncommutative frieze patterns} \label{sec:theoryncfriezes}

The aim of this section is to develop several fundamental properties of noncommutative 
frieze patterns. This generalizes results from our earlier paper \cite{CHJ20} on frieze 
patterns with coefficients from the commutative to the noncommutative setting. 

We first want to prove noncommutative versions of the propagation rules from \cite[Proposition 2.10]{CHJ20}.
For this we shall first need to deduce some results about relations in a noncommutative frieze pattern. 


\newcommand{\mlem}{i}
\newcommand{\jlem}{j}
\newcommand{\klem}{k}
\newcommand{\llem}{\ell}
\newcommand{\ilem}{m}
\newcommand{\T}[3]{c_{#2,#1}^{-1} c_{#2,#3} c_{#1,#3}^{-1}}
\newcommand{\yy}[3]{c_{#3,#1}^{-1} c_{#3,#2}}

\begin{Lemma}\label{lem_relations_step}
Let $\mathcal{C}=(c_{i,j})$ be a noncommutative frieze pattern.
Let $\mlem,\jlem ,\klem ,\llem ,\ilem $ be a cycle of indices such that the following relations hold:
\begin{enumerate}
\item[{(i)}] the noncommutative exchange relation 
for the diagonal $(\mlem,\klem)$ in the quadrangle $\klem ,\jlem ,\mlem,\llem$,
\item[{(ii)}] the noncommutative exchange relation for the diagonal $(\jlem,\ilem)$ in the quadrangle $\ilem ,\mlem,\jlem ,\llem $,
\item[{(iii)}] the triangle relations for the triangles $\klem ,\llem ,\ilem $ and $\mlem ,\jlem ,\llem$. 
\end{enumerate}
Then
\[
  \T{\ilem}{\jlem}{ \llem } = \T{\ilem}{\jlem}{ \klem } + \T{\ilem}{\klem}{ \llem } \quad \Longleftrightarrow \quad
  \T{\ilem}{\mlem}{ \llem } = \T{\ilem}{\mlem}{ \klem } + \T{\ilem}{\klem}{ \llem },
\]
i.e.\ the noncommutative exchange relations for the diagonal $(j,\ell)$ in the quadrangle $\llem,\klem,\jlem,\ilem$ 
and for the diagonal $(i,\ell)$ in the quadrangle $\llem,\klem,\mlem,\ilem$ are equivalent.
\end{Lemma}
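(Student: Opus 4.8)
Recall the shorthand $\T{a}{b}{c} = c_{b,a}^{-1} c_{b,c} c_{a,c}^{-1}$. The two equations to be shown equivalent are
\[
  \T{\ilem}{\jlem}{\llem} = \T{\ilem}{\jlem}{\klem} + \T{\ilem}{\klem}{\llem}
  \qquad\text{and}\qquad
  \T{\ilem}{\mlem}{\llem} = \T{\ilem}{\mlem}{\klem} + \T{\ilem}{\klem}{\llem},
\]
and since the term $\T{\ilem}{\klem}{\llem}$ appears on the right of both, it suffices to prove
\[
  \T{\ilem}{\jlem}{\llem} - \T{\ilem}{\jlem}{\klem} = \T{\ilem}{\mlem}{\llem} - \T{\ilem}{\mlem}{\klem}.
\]
The plan is to manipulate each side into a common expression using only the three families of relations hypothesised in (i)--(iii). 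First I would rewrite $\T{\ilem}{\jlem}{\llem} - \T{\ilem}{\jlem}{\klem}$ by factoring $c_{\jlem,\ilem}^{-1}$ on the left: this gives $c_{\jlem,\ilem}^{-1}\bigl(c_{\jlem,\llem}c_{\ilem,\llem}^{-1} - c_{\jlem,\klem}c_{\ilem,\klem}^{-1}\bigr)$. The idea is that the combination in brackets should be expressible, via exchange relation (i) together with the triangle relations (iii), in terms of data involving $\mlem$ rather than $\jlem$, and symmetrically for the $\mlem$-side using (ii).

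**Main computation.** The key step is to convert the exchange relation (i) for the diagonal $(\mlem,\klem)$ in the quadrangle $\klem,\jlem,\mlem,\llem$ into a statement about the $\T{}{}{}$-symbols. By Remark \ref{rem:relations}\,(3), under the triangle relations the four exchange relations for a quadrangle are interderivable; combining this with the inverted form of the triangle relation from Remark \ref{rem:relations}\,(1), I expect the exchange relation (i) to yield an identity of the shape
\[
  c_{\jlem,\llem}c_{\ilem,\llem}^{-1} - c_{\jlem,\klem}c_{\ilem,\klem}^{-1}
   = c_{\jlem,\mlem}\bigl(c_{\mlem,\llem}c_{\ilem,\llem}^{-1} - c_{\mlem,\klem}c_{\ilem,\klem}^{-1}\bigr),
\]
possibly up to conjugation-type factors that the triangle relations for $\klem,\llem,\ilem$ and $\mlem,\jlem,\llem$ let me absorb. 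Multiplying on the left by $c_{\jlem,\ilem}^{-1}$ and using the triangle relation for $\mlem,\jlem,\llem$ (or its $\ilem$-variant, obtained from Remark \ref{rem:relations}\,(2)) to rewrite $c_{\jlem,\ilem}^{-1}c_{\jlem,\mlem}$ as something of the form $c_{\mlem,\ilem}^{-1}(\cdots)$, the left-hand difference $\T{\ilem}{\jlem}{\llem} - \T{\ilem}{\jlem}{\klem}$ should collapse exactly to $\T{\ilem}{\mlem}{\llem} - \T{\ilem}{\mlem}{\klem}$. Since $\Longleftrightarrow$ is what is asked, and every relation used (the two exchange relations, the two triangle relations, and their consequences via Remark \ref{rem:relations}) is symmetric in the roles of $\jlem$ and $\mlem$ once we observe that $(\mlem,\jlem,\klem,\llem,\ilem)$ being "a cycle of indices" makes hypotheses (i) and (ii) mirror each other, the reverse implication needs no separate argument — the chain of equalities is reversible.

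**Expected obstacle.** The main difficulty will be bookkeeping the noncommutative factors: in the commutative case the analogue of the boxed identity above is the trivial $\frac{c_{\jlem,\llem}}{c_{\ilem,\llem}} - \frac{c_{\jlem,\klem}}{c_{\ilem,\klem}}$ factoring cleanly, but here I must be careful that the "scalar" $c_{\jlem,\mlem}$ pulled out on the left is exactly the one that the triangle relation for $\mlem,\jlem,\llem$ cancels against $c_{\jlem,\ilem}^{-1}$, and that no stray factor survives from the $\klem$-term because the triangle relation for $\klem,\llem,\ilem$ handles it. Getting the precise form of the exchange-relation rewriting in step two — i.e.\ determining whether one needs the exchange relation for $(\mlem,\klem)$ itself or one of its three companions from Remark \ref{rem:relations}\,(3), and on which side to multiply — is the delicate point; I would pin it down by first doing the computation formally in the commutative quotient to fix the shape, then lifting it verbatim while tracking left/right placement of each inverse. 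Once the shape is fixed, the verification is a short chain of substitutions.
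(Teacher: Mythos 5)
Your overall instinct --- relating the ``$j$-defect'' to the ``$m$-defect'' using the hypothesised relations --- is the right one, but the reduction you make at the very start derails the argument. You claim it suffices to prove the unconditional identity
\[
c_{j,i}^{-1}c_{j,\ell}c_{i,\ell}^{-1}-c_{j,i}^{-1}c_{j,k}c_{i,k}^{-1}
\;=\;
c_{m,i}^{-1}c_{m,\ell}c_{i,\ell}^{-1}-c_{m,i}^{-1}c_{m,k}c_{i,k}^{-1},
\]
i.e.\ that the two defects $D_j$ and $D_m$ (each side of the asserted equivalence written as ``left-hand side minus right-hand side'') are \emph{equal}. That identity is not a consequence of (i)--(iii). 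What the hypotheses actually give is $D_m=u\,D_j$ with $u=c_{m,i}^{-1}c_{m,\ell}c_{j,\ell}^{-1}c_{j,i}$ invertible; this is exactly enough for $D_j=0\Leftrightarrow D_m=0$, but it is strictly weaker than $D_j=D_m$. The lemma feeds an induction (Proposition \ref{exrel_3}) whose base case is genuinely needed, so (i)--(iii) cannot force $D_j=0$; in a situation with $D_j\neq 0$ your target identity would force $(u-1)D_j=0$, which generally fails. Establishing $D_j=D_m$ directly would in effect require already knowing that the exchange relations hold --- circular at this point of the paper.

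The ``main computation'' also does not close as sketched. Writing out exchange relation (i) for the diagonal $(m,k)$ and multiplying suitably gives
\[
c_{m,\ell}c_{j,\ell}^{-1}\bigl(c_{j,\ell}c_{i,\ell}^{-1}-c_{j,k}c_{i,k}^{-1}\bigr)
= c_{m,\ell}c_{i,\ell}^{-1}-c_{m,k}c_{i,k}^{-1}+c_{m,j}c_{\ell,j}^{-1}c_{\ell,k}c_{i,k}^{-1},
\]
so the correct left factor is $c_{m,\ell}c_{j,\ell}^{-1}$, not $c_{j,m}$, and there is a surviving cross term $c_{m,j}c_{\ell,j}^{-1}c_{\ell,k}c_{i,k}^{-1}$ for which your sketch has no removal mechanism: eliminating it is precisely where hypothesis (ii) (the exchange relation for $(j,i)$ in the quadrangle $i,m,j,\ell$) together with the triangle $m,j,\ell$ must be used, and your proposal never actually deploys (ii). You also propose rewriting $c_{j,i}^{-1}c_{j,m}$ via a triangle relation for the triangle $m,j,i$, which is not among the hypotheses --- Remark \ref{rem:relations}(2) only produces the other relations for the \emph{same} triangle, not relations for new triangles. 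The paper's proof sidesteps all of this by never separating off the common summand: it shows by a chain of unconditional equalities (using (i), (ii) and both triangles of (iii)) that $c_{m,i}^{-1}c_{m,\ell}c_{i,\ell}^{-1}$ equals $u\bigl(c_{j,i}^{-1}c_{j,k}c_{i,k}^{-1}+c_{k,i}^{-1}c_{k,\ell}c_{i,\ell}^{-1}\bigr)$ precisely when the first relation holds, and equals $c_{m,i}^{-1}c_{m,k}c_{i,k}^{-1}+c_{k,i}^{-1}c_{k,\ell}c_{i,\ell}^{-1}$ unconditionally from there, so the equivalence follows from the invertibility of $u$. I recommend reorganising your argument around that multiplicative relation between the defects rather than an additive identity.
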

\begin{proof}
The exchange relation $c_{\mlem,\llem} c_{\jlem,\llem}^{-1} c_{\jlem,\klem} = c_{\mlem,\klem} - c_{\mlem,\jlem} c_{\llem,\jlem}^{-1} c_{\llem,\klem}$ for $\klem ,\jlem ,\mlem,\llem $ gives
\begin{equation}\label{lem_eq_1}
\yy{\ilem}{\llem }{\mlem} \yy{\llem }{ \ilem }{\jlem}  \T{\ilem}{\jlem}{ \klem } = \T{\ilem}{\mlem}{ \klem } - \yy{\ilem}{ \jlem }{\mlem} \yy{\jlem}{ \ilem }{\llem}  \T{\ilem}{\llem}{  \klem }.
\end{equation}
Now assume $\T{\ilem}{\jlem}{ \llem } = \T{\ilem}{\jlem}{ \klem } + \T{\ilem}{\klem}{ \llem }$. Then
\begin{eqnarray*}
  \T{\ilem}{\mlem}{\llem } &=& \yy{\ilem}{ \llem }{\mlem} \yy{\llem}{\ilem }{\jlem}  \T{\ilem}{\jlem}{ \llem } \\
           &=& \yy{\ilem }{\llem }{\mlem }\yy{\llem }{ \ilem }{\jlem } (\T{\ilem}{\jlem}{ \klem } + \T{\ilem}{\klem }{\llem }) \\
           &=& \T{\ilem}{\mlem }{\klem } - \yy{\ilem}{ \jlem }{\mlem} \yy{\jlem}{\ilem }{\llem}  \T{\ilem}{\llem}{ \klem } + \yy{\ilem }{\llem }{\mlem} \yy{\llem  }{\ilem }{\jlem } \T{\ilem}{\llem }{ \klem }\\
           &=& \T{\ilem}{\mlem }{\klem } + \T{\ilem}{\klem}{\llem }
\end{eqnarray*}
where the third equality is by (\ref{lem_eq_1}) and the triangle $\klem ,\llem ,\ilem $, and the last equality holds
by the exchange relation $\ilem ,\mlem,\jlem ,\llem $ and the triangle $\mlem ,\jlem ,\llem$.
Thus in the sequence of equalities, the first and the last expressions are equal (exchange relation $\llem,\klem,\mlem,\ilem$) if and only if the second and the third expressions are equal (exchange relation $\llem,\klem,\jlem,\ilem$).
\end{proof}

As an application of the above lemma we can obtain new relations which have to be satisfied in any 
noncommutative frieze pattern.

\begin{Proposition}\label{exrel_3}
Let $\mathcal{C}=(c_{i,j})$ be a noncommutative frieze pattern.
Then the noncommutative exchange relations hold for the diagonals $(j,i+1)$ in every quadrangle
with vertices of the form $i+1,i+2,j,i$.
\end{Proposition}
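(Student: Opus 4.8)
The goal is to establish the noncommutative exchange relation for the diagonal $(j,i+1)$ in every quadrangle with vertices $i+1,i+2,j,i$. The natural strategy is to apply Lemma~\ref{lem_relations_step} with a carefully chosen cycle of indices so that the equivalence it provides transports a relation we already know into the one we want. Concretely, I would try to match the quadrangle $i+1,i+2,j,i$ against the data $(\mlem,\jlem,\klem,\llem,\ilem)$ appearing in the lemma: since the lemma's conclusion is an equivalence between the exchange relation for the diagonal $(j,\ell)$ in the quadrangle $\llem,\klem,\jlem,\ilem$ and the one for $(i,\ell)$ in $\llem,\klem,\mlem,\ilem$, I would look for an assignment under which one of these two quadrangles equals $i+1,i+2,j,i$ and the other side is a relation already established — ideally a weak local triangle relation (Lemma~\ref{lem:localllocex}) or one of the local noncommutative exchange relations from Definition~\ref{def:nclocal}, possibly after induction.

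The key steps, in order, would be: (1) choose the index substitution — I expect something like $\mlem \mapsto i$, $\jlem\mapsto i+1$, $\llem\mapsto i+2$, $\klem$ and $\ilem$ chosen among the remaining relevant vertices (one of them being $j$), with the precise choice dictated by which hypotheses (i)--(iii) of Lemma~\ref{lem_relations_step} can be verified in a noncommutative frieze pattern; (2) verify hypotheses (i) and (ii), the two exchange relations in their respective quadrangles — these should follow either directly from the local exchange relations of Definition~\ref{def:nclocal} (when the relevant quadrangle has vertices of the form $a,a+1,b,b+1$) or from a previously established proposition, possibly invoking an induction on the distance between $j$ and the edge $\{i+1,i+2\}$; (3) verify hypothesis (iii), the two triangle relations, using the weak local triangle relations from Lemma~\ref{lem:localllocex} (these cover triangles with two consecutive vertices, which is exactly what one expects the triangles $\klem,\llem,\ilem$ and $\mlem,\jlem,\llem$ to be under a good choice of indices); (4) identify the already-known relation on one side of the equivalence — plausibly a local exchange relation for a quadrangle of the form $i+1,i+2,j,j+1$ or similar — and conclude via the lemma that the desired exchange relation for $(j,i+1)$ in $i+1,i+2,j,i$ holds.

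If a single application of Lemma~\ref{lem_relations_step} does not reach all quadrangles $i+1,i+2,j,i$ at once, I would set up an induction on $j$ (equivalently, on the distance from $j$ to the consecutive pair $i+1,i+2$), with the base case $j=i+3$ handled by a local exchange relation, and the inductive step feeding the previously obtained relation into the equivalence of the lemma — this mirrors the inductive pattern already used in the proof of Lemma~\ref{lem:localllocex}.

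\textbf{Main obstacle.} The delicate point is step~(1): finding the index substitution into Lemma~\ref{lem_relations_step} that simultaneously makes hypotheses (i)--(iii) verifiable from the \emph{local} relations (or from relations already proved) \emph{and} produces the quadrangle $i+1,i+2,j,i$ on the correct side of the equivalence. The triangle relations in hypothesis (iii) are the binding constraint, since in a noncommutative frieze pattern we only know triangle relations for triangles with two consecutive vertices (the weak local ones), so both triangles $\klem,\llem,\ilem$ and $\mlem,\jlem,\llem$ must, after substitution, be of that restricted shape. Threading this needle — getting two weak-local triangles, two available exchange relations, and the target quadrangle all at once — is the real work; once the substitution is pinned down, the rest is bookkeeping plus, if needed, the induction on $j$.
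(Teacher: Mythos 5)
Your plan coincides with the paper's proof: the substitution you are looking for is the cycle $j+1,\,j,\,i+2,\,i+1,\,i$ in Lemma~\ref{lem_relations_step}, whose hypotheses are then exactly the local exchange relations for the quadrangles $i+2,j,j+1,i+1$ and $i,j+1,j,i+1$ together with the weak local triangles $i,i+1,i+2$ and $i+1,j,j+1$, yielding the equivalence of the exchange relations for $i+1,i+2,j,i$ and $i+1,i+2,j+1,i$. The induction on $j$ with base case $j=i+3$ (a local relation) then finishes, exactly as you describe.
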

\begin{proof}

We apply Lemma \ref{lem_relations_step} to the cycle $j+1,j,i+2,i+1,i$.
The assumptions are satisfied since the exchange relations $i+2,j,j+1,i+1$ and $i,j+1,j,i+1$ and the triangles $i,i+1,i+2$ and $i+1,j,j+1$ are (weak) local relations.
Thus the exchange relations $i+1,i+2,j,i$ and $i+1,i+2,j+1,i$ are equivalent.
The cycle $i+1,i+2,j,i$ for $j=i+3$ is a local relation; the claim follows for $j=i+4,\ldots,i-1$ by induction.
\end{proof}

\begin{Theorem} \label{prop:propagation}
Let $\mathcal{C}=(c_{i,j})$ be a noncommutative frieze pattern. 
\begin{enumerate}
\item[{(a)}] (Propagation along rows) For all $i,j$ we have
$$(c_{i,j-1},c_{i,j}) \begin{pmatrix} 0 & -c_{j,j-1}^{-1}c_{j,j+1} \\
1 & c_{j-1,j}^{-1}c_{j-1,j+1}\end{pmatrix} = (c_{i,j},c_{i,j+1}).
$$
\item[{(b)}] (Propagation along columns) For all $i,k$ we have
$$\begin{pmatrix} 0 & 1 \\ -c_{i+1,i}c_{i-1,i}^{-1} & c_{i+1,i-1}c_{i,i-1}^{-1}\end{pmatrix}
\begin{pmatrix} c_{i-1,k} \\ c_{i,k} \end{pmatrix} =
\begin{pmatrix} c_{i,k} \\ c_{i+1,k} \end{pmatrix}.
$$
\end{enumerate}
\end{Theorem}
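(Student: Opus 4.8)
\textbf{Proof proposal for Theorem \ref{prop:propagation}.}

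The plan is to prove parts (a) and (b) by unwinding the matrix identities into scalar equations and recognizing each as an exchange relation or a rearrangement of one that is already available. For part (a), the matrix equation is equivalent to the single nontrivial equation obtained from the second column, namely
\[
-c_{i,j-1}c_{j,j-1}^{-1}c_{j,j+1} + c_{i,j}c_{j-1,j}^{-1}c_{j-1,j+1} = c_{i,j+1};
\]
the first column just gives the tautology $c_{i,j}=c_{i,j}$. This target equation is precisely a noncommutative exchange relation for the diagonal $(i,j+1)$ in the quadrangle with vertices $i,j-1,j,j+1$, after one rewrites the terms using triangle relations to convert $c_{j-1,j}^{-1}c_{j-1,j+1}$ and $c_{j,j-1}^{-1}c_{j,j+1}$ into the shapes appearing in \eqref{eq:ncexchange}. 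So the core of the argument is: by Proposition \ref{exrel_3} and Lemma \ref{lem:localllocex} (together with Remark \ref{rem:relations}\,(3) to pass between the four exchange relations of a quadrangle once the triangle relations are in force) the relevant exchange relation and the relevant triangle relations all hold in any noncommutative frieze pattern; substituting them yields the displayed identity. I would organize this as: first state which exchange relation I need, then cite that it holds, then do the short manipulation with triangle relations.

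For part (b), I would proceed symmetrically. The matrix equation reduces to the single scalar equation from the second row,
\[
-c_{i+1,i}c_{i-1,i}^{-1}c_{i-1,k} + c_{i+1,i-1}c_{i,i-1}^{-1}c_{i,k} = c_{i+1,k},
\]
the first row again being the tautology $c_{i,k}=c_{i,k}$. This is a noncommutative exchange relation for the diagonal $(i+1,k)$ in the quadrangle with vertices $i-1,i,i+1,k$, again after adjusting two factors by triangle relations. The exchange relations needed here are of the type covered by Proposition \ref{exrel_3} (now with the two consecutive vertices being $i,i+1$ and the third vertex $i-1$, the fourth $k$) or can be reduced to local/weak local relations by the same Lemma \ref{lem_relations_step} machinery; the triangle relations needed are weak local triangle relations, available by Lemma \ref{lem:localllocex}. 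So the structure mirrors part (a).

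The main obstacle I anticipate is bookkeeping: making sure that the particular exchange relations invoked really are among those established in Proposition \ref{exrel_3} (whose statement is phrased for diagonals of the specific form $(j,i+1)$ in quadrangles $i+1,i+2,j,i$), and, if not, that they can be obtained from it by the permissible moves — namely Remark \ref{rem:relations}\,(3) to rotate to another diagonal of the same quadrangle once all four triangle relations of that quadrangle are known, or a further application of Lemma \ref{lem_relations_step}. A secondary subtlety is that converting between the ``propagation'' shape of the factors (e.g.\ $c_{j-1,j}^{-1}c_{j-1,j+1}$) and the ``exchange relation'' shape (e.g.\ $c_{j,j+1}^{-1}c_{j,j-1}$-type products) requires applying the triangle relation in the correct direction, including its inverted form from Remark \ref{rem:relations}\,(1); I would want to be careful that every intermediate inverse that appears is of an element already known to be invertible (guaranteed by Definition \ref{def:nclocal} for entries, and by the computation preceding the tameness definition for the relevant $2\times 2$-combinations). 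Once the right relations are in hand, the remaining verification is a short direct substitution, so I would keep that part terse.
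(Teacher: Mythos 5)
Your reduction of each matrix identity to its single nontrivial scalar component (second column for (a), second row for (b)) is correct, and your overall route — derive that component from Proposition \ref{exrel_3} plus a triangle relation — is the paper's. But the key identification is wrong. The equation
\[
c_{i,j+1} \;=\; -\,c_{i,j-1}c_{j,j-1}^{-1}c_{j,j+1} \;+\; c_{i,j}c_{j-1,j}^{-1}c_{j-1,j+1}
\]
is \emph{not} an exchange relation for the diagonal $(i,j+1)$ in the quadrangle $i,j-1,j,j+1$: that quadrangle's diagonals are $(i,j)$ and $(j-1,j+1)$, so $(i,j+1)$ is a \emph{side}; moreover the equation carries a minus sign, which no relation of the form \eqref{eq:ncexchange} does, and triangle relations can reshape the products but cannot produce either feature. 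The correct step — the crux of the paper's proof — is to take the exchange relation for the diagonal $(i,j)$ in that quadrangle, which is exactly what Proposition \ref{exrel_3} supplies for the cycle $j,j+1,i,j-1$, namely $c_{i,j} = c_{i,j-1}c_{j+1,j-1}^{-1}c_{j+1,j} + c_{i,j+1}c_{j-1,j+1}^{-1}c_{j-1,j}$, then \emph{solve} it for the term containing $c_{i,j+1}$ by right-multiplying with $c_{j-1,j}^{-1}c_{j-1,j+1}$, and finally use the local triangle relation for $j-1,j,j+1$ to identify the resulting coefficient of $c_{i,j-1}$ with $-c_{j,j-1}^{-1}c_{j,j+1}$. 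The same correction is needed in part (b): $(i+1,k)$ is a side of the quadrangle $i-1,i,i+1,k$, whose diagonals are $(i-1,i+1)$ and $(i,k)$, so you must start from the exchange relation for $(i,k)$ and solve for $c_{i+1,k}$.

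Second, your appeal to Remark \ref{rem:relations}\,(3) "to pass between the four exchange relations of a quadrangle once the triangle relations are in force" is circular at this stage. That remark requires \emph{all} triangle relations inside the quadrangle; for the quadrangle $j-1,j,j+1,i$ the triangle $j-1,j+1,i$ has no two consecutive vertices, so it is not covered by Lemma \ref{lem:localllocex} and is only established in Lemma \ref{lem:alltriangles}, whose proof uses Corollary \ref{cor:consprop} and hence the present theorem. Remark \ref{rem:nclocal}\,(2) explicitly warns against exactly this move. You therefore have to work with precisely the directed exchange relation that Proposition \ref{exrel_3} provides (and, for part (b), its row/column-transposed analogue — note that $(k,i)$ and $(i,k)$ give genuinely different relations in the noncommutative setting), which is why the bookkeeping you flag as a "main obstacle" is not optional but is where the proof actually lives.
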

\begin{proof} $(a)$
Since all local noncommutative exchange and local triangle relations hold, Proposition \ref{exrel_3} yields the relation for the cycle $j,j+1,i,j-1$ for all $i,j$:
\begin{equation}\label{thmeq_1}
c_{i,j} = c_{i,j-1} c_{j+1,j-1}^{-1} c_{j+1,j} + c_{i,j+1} c_{j-1,j+1}^{-1} c_{j-1,j}
\end{equation}
We write $s:=- c_{j,j-1}^{-1}c_{j,j+1}$, $t:=c_{j-1,j}^{-1}c_{j-1,j+1}$ for the two non-trivial entries in the matrix of the equation in $(a)$.
Multiplying Equation (\ref{thmeq_1}) by $t$ from the right gives
\begin{equation}\label{thmeq_2}
c_{i,j} t = c_{i,j-1} c_{j+1,j-1}^{-1} c_{j+1,j} c_{j-1,j}^{-1}c_{j-1,j+1} + c_{i,j+1}.
\end{equation}
The triangle relation $j-1,j,j+1$ gives
\[ s = -c_{j+1,j-1}^{-1} c_{j+1,j} c_{j-1,j}^{-1}c_{j-1,j+1} \]
thus Equation (\ref{thmeq_2}) becomes
\begin{equation}\label{thmeq_3}
c_{i,j+1} = c_{i,j-1} s + c_{i,j} t
\end{equation}
which proves $(a)$. For $(b)$, interchange the roles of rows and columns in the proof of $(a)$.
\end{proof}

\begin{Corollary} \label{prop:ncqdet0}
Let $\mathcal{C}=(c_{i,j})$ be a noncommutative frieze pattern. Then for every 
(neighbouring) $3\times 3$-submatrix 
$$M=\begin{pmatrix} c_{i,j} & c_{i,j+1} & c_{i,j+2} \\c_{i+1,j} & c_{i+1,j+1} & c_{i+1,j+2} \\
c_{i+2,j} & c_{i+2,j+1} & c_{i+2,j+2} \end{pmatrix} 
$$
the quasideterminant $|M|_{3,3}$ is equal to 0.
In particular, every noncommutative frieze pattern is tame.
\end{Corollary}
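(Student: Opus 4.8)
The plan is to deduce the vanishing of $|M|_{3,3}$ directly from the propagation formula along columns, Theorem \ref{prop:propagation}(b), together with the characterisation of vanishing quasideterminants in Proposition \ref{prop:qdet0}. Recall that $|M|_{3,3}=0$ holds if and only if the third row of $M$ is a left linear combination of the first two rows (Proposition \ref{prop:qdet0}(ii)); this is exactly the shape of statement that propagation along columns provides.

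First I would apply Theorem \ref{prop:propagation}(b) with the roles of the rows chosen so that the index $i+1$ in that statement becomes our index $i+2$. Concretely, the formula reads
\[
\begin{pmatrix} c_{i+1,k} \\ c_{i+2,k} \end{pmatrix}
=
\begin{pmatrix} 0 & 1 \\ -c_{i+2,i+1}c_{i,i+1}^{-1} & c_{i+2,i}c_{i+1,i}^{-1}\end{pmatrix}
\begin{pmatrix} c_{i,k} \\ c_{i+1,k} \end{pmatrix},
\]
valid for \emph{all} $k$. Reading off the bottom component gives, for every $k$,
\[
c_{i+2,k} = -\,c_{i+2,i+1}c_{i,i+1}^{-1}\,c_{i,k} \;+\; c_{i+2,i}c_{i+1,i}^{-1}\,c_{i+1,k}.
\]
Setting $\lambda := -c_{i+2,i+1}c_{i,i+1}^{-1}$ and $\mu := c_{i+2,i}c_{i+1,i}^{-1}$ (both elements of $R$, independent of $k$), this says that the entire third row $(c_{i+2,j},c_{i+2,j+1},c_{i+2,j+2})$ of $M$ equals $\lambda$ times the first row plus $\mu$ times the second row. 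Hence the third row of $M$ is a left linear combination of the other two rows.

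Next I would invoke Proposition \ref{prop:qdet0}: since $|M|_{3,3}$ is defined (the paper has already shown, via Lemma \ref{lemma:inverse2by2} and the local noncommutative exchange relations, that every neighbouring $2\times 2$-submatrix of a noncommutative frieze pattern is invertible, which is precisely the hypothesis needed for $|M|_{3,3}$ to exist), the equivalence of (i) and (ii) in that proposition turns the left-linear-dependence of the rows into $|M|_{3,3}=0$. Applying this to every choice of $i,j$ (indices modulo $m$) shows that every neighbouring $3\times 3$-submatrix of $\mathcal{C}$ has vanishing $(3,3)$-quasideterminant, which is by definition the tameness of $\mathcal{C}$. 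The only mild subtlety worth flagging is bookkeeping of indices: one must check that the instance of Theorem \ref{prop:propagation}(b) applied is legitimately available for the relevant rows of a frieze pattern in the array form of Figure \ref{fig:pattern} — but since that theorem is stated for all $i$ with indices modulo $m$, no obstacle arises, and there is essentially no hard part here: the corollary is a direct packaging of the already-proved column propagation formula.
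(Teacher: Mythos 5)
Your proof is correct and takes essentially the same route as the paper's: the paper applies Theorem \ref{prop:propagation}(a) to exhibit the third \emph{column} of $M$ as a right linear combination of the first two and invokes the corresponding clause of Proposition \ref{prop:qdet0}, whereas you apply part (b) to exhibit the third \emph{row} as a left linear combination of the first two and invoke clause (ii) --- a mirror image of the same two-step argument. Your remark that $|M|_{3,3}$ is defined, via the invertibility of neighbouring $2\times 2$-submatrices established at the end of Section \ref{sec:quasidet}, matches the paper's justification exactly.
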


\begin{proof}
We have observed at the end of Section \ref{sec:quasidet} that the quasideterminant $|M|_{3,3}$ 
is defined.
By Theorem \ref{prop:propagation} $(a)$, the third column is a 
right linear combination of the first two columns, thus the quasideterminant $|M|_{3,3}$ is equal to $0$ by Proposition \ref{prop:qdet0}. 
\end{proof}

\begin{Definition} (Noncommutative $\mu$-matrices) \label{def:mumatrix}
For any four numbers $c,d,e,f$ we define a $2\times 2$-matrix
$$\mu(c,d,e,f) = \begin{pmatrix} 0 & -e^{-1}d \\ 1 & f^{-1}c \end{pmatrix}.
$$

\end{Definition}

\begin{Remark} \label{rem:propagation}
\begin{enumerate}
\item[{(a)}] With the notation from Definition \ref{def:mumatrix} the propagation rules in
Proposition \ref{prop:propagation} can be written as

\noindent
(Propagation along rows) 
$$(c_{i,j-1},c_{i,j}) \mu(c_{j-1,j+1},c_{j,j+1},c_{j,j-1},c_{j-1,j}) = (c_{i,j},c_{i,j+1}).
$$
(Propagation along columns) 
$$\mu(c_{i,i-1}^{-1}, c_{i-1,i}^{-1},c_{i+1,i}^{-1},c_{i+1,i-1}^{-1})^T
\begin{pmatrix} c_{i-1,k} \\ c_{i,k} \end{pmatrix} =
\begin{pmatrix} c_{i,k} \\ c_{i+1,k} \end{pmatrix}.
$$
where the exponent $T$ denotes transposition of matrices.
\item[{(b)}] 
In the commutative setting the $\mu$-matrices for the propagation rules had the form
(cf.\ \cite[Definition 2.8]{CHJ20}):
$$ \begin{pmatrix} 0 & -\frac{c_{j,j+1}}{c_{j-1,j}} \\ 1 & \frac{c_{j-1,j+1}}{c_{j-1,j}} \end{pmatrix}.
$$
Comparing this to the matrices in Definition \ref{def:mumatrix} and Remark \ref{rem:propagation}
note that the main difference occurs for the denominators in the second column which are
equal in the commutative setting but are in general different for noncommutative frieze patterns.
\end{enumerate}
\end{Remark}

\begin{Remark} (Extended noncommutative frieze patterns) \label{rem:extended}
Let $\mathcal{C}=(c_{i,j})$ be a noncommutative frieze pattern (as in Section \ref{sec:friezepattern}). 
The corresponding {\em extended noncommutative frieze pattern} $\hat{\mathcal{C}}$ is the 
infinite array of the form
$${\footnotesize
\begin{array}{cccccccccc}
& & &  \ddots & & & &  & & \\
{\blue -c_{i-1,i-2}} & 0 & c_{i-1,i} & c_{i-1,i+1} & \ldots & c_{i-1,i+m-2} & 0 & -c_{i-1,i} & & \\
& {\blue -c_{i,i-1}} & 0 & c_{i,i+1} & c_{i,i+2} & \ldots & c_{i,i+m-1} & 0 & -c_{i,i+1} &  \\
& & {\blue -c_{i+1,i}} & 0 & c_{i+1,i+2} & c_{i+1,i+3} & \ldots & c_{i+1,i+m} 
& 0 & -c_{i+1,i+2} \\
& & & & & & \ddots & & & 
\end{array}
}
$$
Compared to the commutative setting as in \cite[Remark 2.7]{CHJ20}, only the diagonal highlighted in blue
is changed, where now $-c_{i,i-1}$ appears (instead of $-c_{i-1,i}$ in the commutative case). 
Note that with this convention, the propagation formula in Proposition \ref{prop:propagation} also holds
for the boundary cases $j=i$ and $j=i+m$ in part (a) and similarly for part (b).  
\end{Remark}

\begin{Corollary} (Consequences of the propagation rules) \label{cor:consprop}
Let $\mathcal{C}=(c_{i,j})$ be a noncommutative frieze pattern.
\begin{enumerate}
\item[{(a)}] For all $i,j$ we have
\begin{eqnarray*}
M_{i,j} & := & \prod_{r=i}^j \mu(c_{r-1,r+1},c_{r,r+1},c_{r,r-1},c_{r-1,r}) \\
& = & \begin{pmatrix} -c_{i,i-1} & 0 \\ 0 & c_{i-1,i}\end{pmatrix}^{-1} 
\begin{pmatrix} c_{i,j} & c_{i,j+1} \\ c_{i-1,j} & c_{i-1,j+1} \end{pmatrix} \\
& = & \begin{pmatrix} -c_{i,i-1}^{-1} c_{i,j} & -c_{i,i-1}^{-1} c_{i,j+1} \\
c_{i-1,i}^{-1} c_{i-1,j} & c_{i-1,i}^{-1}c_{i-1,j+1} \end{pmatrix}.
\end{eqnarray*}
\item[{(b)}] We have
$$\prod_{r=1}^m \mu(c_{r-1,r+1},c_{r,r+1},c_{r,r-1},c_{r-1,r}) = \begin{pmatrix} -1 & 0 \\ 0 & -1 
\end{pmatrix} =:-\mathrm{id}.
$$
\end{enumerate}
\end{Corollary}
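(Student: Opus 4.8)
The plan is to deduce both parts of Corollary~\ref{cor:consprop} from the propagation formulae in Theorem~\ref{prop:propagation}(a), using the extended noncommutative frieze pattern of Remark~\ref{rem:extended} so that the telescoping works even at the boundary columns $j=i$ and $j=i+m$.

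For part~(a), I would argue by induction on $j$ (for fixed $i$), proving the middle equality
\[
M_{i,j} = \begin{pmatrix} -c_{i,i-1} & 0 \\ 0 & c_{i-1,i}\end{pmatrix}^{-1}
\begin{pmatrix} c_{i,j} & c_{i,j+1} \\ c_{i-1,j} & c_{i-1,j+1} \end{pmatrix};
\]
the last equality is then just matrix multiplication. For the base case $j=i$ one checks directly, using the extended frieze pattern convention, that the single matrix $\mu(c_{i-1,i+1},c_{i,i+1},c_{i,i-1},c_{i-1,i}) = \begin{pmatrix} 0 & -c_{i,i-1}^{-1}c_{i,i+1} \\ 1 & c_{i-1,i}^{-1}c_{i-1,i+1}\end{pmatrix}$ equals the claimed right-hand side with $j=i$, where the relevant entries are $c_{i,i}=0$, $c_{i-1,i-1}=0$ on the diagonal of $\hat{\mathcal{C}}$. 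For the inductive step, I write $M_{i,j+1} = M_{i,j}\cdot \mu(c_{j,j+2},c_{j+1,j+2},c_{j+1,j},c_{j,j+1})$, substitute the inductive expression for $M_{i,j}$, and observe that by Theorem~\ref{prop:propagation}(a) applied to rows $i$ and $i-1$ (i.e.\ the row-propagation identity $(c_{k,j},c_{k,j+1})\,\mu(\ldots) = (c_{k,j+1},c_{k,j+2})$ for $k=i$ and $k=i-1$) the product $\begin{pmatrix} c_{i,j} & c_{i,j+1} \\ c_{i-1,j} & c_{i-1,j+1}\end{pmatrix}\mu(c_{j,j+2},c_{j+1,j+2},c_{j+1,j},c_{j,j+1})$ collapses to $\begin{pmatrix} c_{i,j+1} & c_{i,j+2} \\ c_{i-1,j+1} & c_{i-1,j+2}\end{pmatrix}$, which is exactly what is needed. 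This is where the extended frieze pattern is essential: the propagation formula must be valid across the $0$-entries at the ends of the rows, which is precisely the content of the last sentence of Remark~\ref{rem:extended}.

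For part~(b), I specialize part~(a) to the situation $i=1$, $j=m$. Then on the right-hand side the factor $\begin{pmatrix} c_{1,m} & c_{1,m+1} \\ c_{0,m} & c_{0,m+1}\end{pmatrix}$ is to be read in the extended pattern $\hat{\mathcal{C}}$ with indices reduced modulo $m$: by the convention of Remark~\ref{rem:extended} we have $c_{1,m+1}=c_{1,1}=0$, $c_{0,m}=c_{0,0}=0$, $c_{1,m}=-c_{1,2}$ (the blue diagonal entry, shifted), and $c_{0,m+1}=c_{0,1}$ — more precisely one reads off $\begin{pmatrix} -c_{1,0} & 0 \\ 0 & c_{0,1}\end{pmatrix}^{-1}\begin{pmatrix} c_{1,m} & 0 \\ 0 & c_{0,m+1}\end{pmatrix}$, and using $c_{1,m}=-c_{1,0}$, $c_{0,m+1}=-c_{0,1}$ (again from the extended-pattern diagonal, being careful that a full period of length $m$ lands on the $-c_{\ast,\ast+1}$ diagonal to the right) gives $\begin{pmatrix} -1 & 0 \\ 0 & -1\end{pmatrix}$. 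So the product of all $m$ consecutive $\mu$-matrices is $-\mathrm{id}$.

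The main obstacle I anticipate is bookkeeping at the boundary: getting the precise identification of the entries $c_{1,m}$, $c_{1,m+1}$, $c_{0,m}$, $c_{0,m+1}$ in the extended pattern right, including the signs, and confirming that Theorem~\ref{prop:propagation}(a) really does hold in the two boundary instances $j=i$ and $j=i+m$ with the Remark~\ref{rem:extended} conventions (this should be checked by expanding the matrix identity into its two scalar components and comparing with the defining local exchange relations at the ends of the row). Once the boundary cases of the propagation formula are nailed down, the induction in~(a) is routine and~(b) is an immediate specialization. No new noncommutative phenomena arise here beyond carefully not reordering factors.
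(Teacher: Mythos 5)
Your argument for part (a) is essentially the paper's own argument in inductive clothing: the paper applies the row-propagation formula of Theorem \ref{prop:propagation}(a) repeatedly to the two rows $i$ and $i-1$ of the extended pattern, starting from the boundary pairs $(\hat c_{i,i-1},\hat c_{i,i})=(-c_{i,i-1},0)$ and $(\hat c_{i-1,i-1},\hat c_{i-1,i})=(0,c_{i-1,i})$, obtaining $(-c_{i,i-1},0)\,M_{i,j}=(c_{i,j},c_{i,j+1})$ and $(0,c_{i-1,i})\,M_{i,j}=(c_{i-1,j},c_{i-1,j+1})$, and then stacks these and inverts the diagonal matrix; your base case plus inductive step is the same telescoping. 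Part (a) is therefore fine and matches the paper.

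For part (b) your strategy (specialize (a) to $i=1$, $j=m$) is again the paper's, but your identification of the boundary entries is internally inconsistent and, in the version you settle on, wrong: it would not yield $-\mathrm{id}$. In the extended pattern of Remark \ref{rem:extended}, row $1$ reads $-c_{1,0},\,0,\,c_{1,2},\dots,c_{1,1+m-1},\,0,\,-c_{1,2}$ in columns $0,1,2,\dots,m,m+1,m+2$. Hence $\hat c_{1,m}=c_{1,1+m-1}=c_{1,0}$ with \emph{no} minus sign (column $m$ is still inside the original strip of row $1$; a sign only appears after shifting a full period $m$), and $\hat c_{1,m+1}=0$; likewise $\hat c_{0,m}=0$ and $\hat c_{0,m+1}=-c_{0,1}$ (this entry \emph{is} a full period to the right of $c_{0,1}$, hence negated). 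Then
$$M_{1,m}=\begin{pmatrix} -c_{1,0} & 0 \\ 0 & c_{0,1}\end{pmatrix}^{-1}\begin{pmatrix} c_{1,0} & 0 \\ 0 & -c_{0,1}\end{pmatrix}=-\mathrm{id}.$$
With the values you adopt ($\hat c_{1,m}=-c_{1,0}$, $\hat c_{0,m+1}=-c_{0,1}$) one gets $\mathrm{diag}(1,-1)$ instead, and your first reading ($\hat c_{1,m}=-c_{1,2}$, $\hat c_{0,m+1}=c_{0,1}$) is also incorrect. This is precisely the bookkeeping you flagged as the main risk; once the four entries are read off as above, the proof is complete and coincides with the paper's.
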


\begin{proof}
(a) Using the extended frieze pattern from Remark \ref{rem:extended}, the propagation formulae
in Proposition \ref{prop:propagation} yield
$$({\blue -c_{i,i-1}},\underbrace{c_{i,i}}_{=0}) M_{i,j} = (c_{i,j},c_{i,j+1})
$$
and 
$$(\underbrace{c_{i-1,i-1}}_{=0},c_{i-1,i}) M_{i,j} = (c_{i-1,j},c_{i-1,j+1}).
$$
Taken together, these equation yield the assertion in part (a).
\smallskip

\noindent
(b) This follows from part (a) since in the extended frieze pattern we have $\hat{c}_{0,m+1}
=-c_{0,1}$. 
\end{proof}

We want to generalize the classic notion of a quiddity cycle, due to Conway and Coxeter, to
noncommutative frieze patterns (with coefficients). Our approach is based on the formula
in Corollary \ref{cor:consprop}\,(b) (see also Definition \ref{def:mumatrix}).

\begin{Definition} \label{def:quiddity}
Let $m\in \mathbb{N}$. A sequence 
$(d_0,d_0',d_1,d_1',\ldots,d_{m-1},d_{m-1}';c_0,c_1,\ldots,c_{m-1})$ of elements of a ring $R$ is called a 
{\em noncommutative quiddity cycle} if $d_i$, $d_i'$ are invertible for all $i$ and 
$$\mu(c_0,d_1,d_0',d_0)\mu(c_1,d_2,d_1',d_1)\ldots \mu(c_{m-2},d_{m-1},d_{m-2}',d_{m-2})
\mu(c_{m-1},d_{0},d_{m-1}',d_{m-1}) = -\mathrm{id}.
$$
\end{Definition}

\begin{Remark}
In the classic Conway-Coxeter theory all boundary entries are equal to 1 and the classic  
quiddity cycles appear as the special case 
$(1,1,1,1,\ldots,1,1;c_0,c_1,\ldots,c_{m-1})$
of Definition~\ref{def:quiddity}.
\end{Remark}

\begin{Example}
\begin{enumerate}
\item[{(1)}] For $m=1$ there does not exist any noncommutative quiddity cycle since
$\mu(c_0,d_0,d_0',d_0) \neq -\mathrm{id}$ by Definition \ref{def:mumatrix}.
\item[{(2)}] For $m=2$ we have
\begin{eqnarray*}
\mu(c_0,d_1,d_0',d_0)\mu(c_1,d_0,d_1',d_1) & = & 
\begin{pmatrix} 0 & -(d_0')^{-1}d_1 \\ 1 & d_0^{-1}c_0 \end{pmatrix}
\begin{pmatrix} 0 & -(d_1')^{-1}d_0 \\ 1 & d_1^{-1}c_1 \end{pmatrix} \\
& = & \begin{pmatrix} -(d_0')^{-1}d_1 & -(d_0')^{-1}c_1 \\ 
d_0^{-1}c_0 & -(d_1')^{-1}d_0 + d_0^{-1}c_0d_1^{-1}c_1 \end{pmatrix}.
\end{eqnarray*}
This matrix becomes $-\mathrm{id}$ if and only if $c_0=0=c_1$ and 
$d_0=d_1'$ and $d_1=d_0'$. Hence, the noncommutative quiddity cycles for $m=2$ are
$(d_0,d_0',d_0',d_0;0,0)$ with invertible elements $d_0$, $d_0'$.
\item[{(3)}] For $m=3$ we have that 
$$\mu(c_0,d_1,d_0',d_0)\mu(c_1,d_2,d_1',d_1)\mu(c_2,d_0,d_2',d_2)$$ is equal to
$$
\begin{pmatrix} -(d_0')^{-1}d_1 & -(d_0')^{-1}c_1 \\ 
d_0^{-1}c_0 & -(d_1')^{-1}d_2 + d_0^{-1}c_0d_1^{-1}c_1 \end{pmatrix} 
\begin{pmatrix} 0 & -(d_2')^{-1}d_0 \\ 1 & d_2^{-1}c_2\end{pmatrix}
$$
which is then computed to be equal to
$$\begin{pmatrix}
-(d_0')^{-1}c_1 & (d_0')^{-1}d_1(d_2')^{-1}d_0 - (d_0')^{-1}c_1 d_2^{-1}c_2 \\
-(d_1')^{-1}d_2 + d_0^{-1}c_0d_1^{-1}c_1 & -d_0^{-1}c_0(d_2')^{-1}d_0 - (d_1')^{-1}c_2
+ d_0^{-1}c_0d_1^{-1}c_1d_2^{-1}c_2 
\end{pmatrix}.
$$
We want to determine when this matrix becomes $-\mathrm{id}$. From the $(1,1)$-entry we get
\begin{equation} \label{eq:c1}
c_1=d_0'.
\end{equation}
We can use this equation and solve the off-diagonal entries (which need to be $0$) 
for $c_0$ and $c_2$, respectively. Then 
we obtain the conditions
\begin{equation} \label{eq:c0}
c_0 = d_0(d_1')^{-1}d_2 (d_0')^{-1}d_1
\end{equation}
and
\begin{equation} \label{eq:c2}
c_2 = d_2(d_0')^{-1}d_1 (d_2')^{-1}d_0.
\end{equation}
The remaining $(2,2)$-entry  
$$-d_0^{-1}c_0(d_2')^{-1}d_0 - (d_1')^{-1}c_2+ d_0^{-1}c_0d_1^{-1}c_1d_2^{-1}c_2
$$ 
has to be equal to $-1$. Plugging in equations (\ref{eq:c0}), (\ref{eq:c2}) and (\ref{eq:c1}) one 
observes that (after cancellations) all three summands become equal up to signs and hence the
$(2,2)$-entry is equal to 
$$-(d_1')^{-1}d_2(d_0')^{-1}d_1(d_2')^{-1}d_0.
$$
For a noncommutative quiddity cycle we thus get the condition 
\begin{equation} \label{eq:qctriangle}
d_2(d_0')^{-1}d_1 = d_1' d_0^{-1} d_2'.
\end{equation}
Note that this is a triangle relation (see Definition \ref{eq:triangle}) where $d_i,d_i'$ denote the
values on arrows in opposite directions. 
Inserting this triangle relation into equations (\ref{eq:c0}) and (\ref{eq:c2}) we obtain 
$c_0=d_2'$ and $c_2=d_1'$, respectively. 

We summarize the above calculations: all noncommutative quiddity cycles for $m=3$ are given by
$$\{ (d_0,d_0',d_1,d_1',d_2,d_2'; d_2',d_0',d_1')\,|\, d_2(d_0')^{-1}d_1 = d_1' d_0^{-1} d_2'\}.
$$

As a special case for the classic situation (commutative, all boundary values equal to 1) we get back
the well-known result that there is only one quiddity cycle for $m=3$, with all entries equal to 1. 
\end{enumerate}
\end{Example}

The following result shows that any noncommutative quiddity cycle (with $m\ge 3$)
can be reduced to a shorter noncommutative quiddity cycle.
Our motivation for stating this formula comes from noncommutative frieze patterns, but this is
a general formula for the $\mu$-matrices as in Definition \ref{def:mumatrix}.

\begin{Proposition} \label{prop:qcreduce}
Consider elements $c_{i-1},c_i,c_{i+1}$ and invertible elements $d_{i-1}$, $d_i$, $d_{i+1}$, $d'_{i-1}$, $d'_i$, $d'_{i+1}$ in a ring $R$.
Then the following products of matrices are equal:
\begin{enumerate}
\item[{(i)}] $\mu(c_{i-1},d_i,d_{i-1}',d_{i-1})\mu(c_{i},d_{i+1},d_{i}',d_{i})
\mu(c_{i+1},d_{i+2},d_{i+1}',d_{i+1})$
\item[{(ii)}] $\mu(-d_{i-1}(d_i')^{-1}d_{i+1}+c_{i-1}d_i^{-1}c_i,c_i,d_{i-1}',d_{i-1}) \:\:\cdot$ \\
$\mu(-d_i(d_{i+1}')^{-1}d_{i+2}+c_i d_{i+1}^{-1}c_{i+1},d_{i+2}, d_{i+1}'d_i^{-1}c_i d_{i+1}^{-1}d_i',c_i)$
\end{enumerate}
\end{Proposition}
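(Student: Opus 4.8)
The plan is to prove the identity by direct matrix multiplication, computing both sides explicitly and matching entries. Since each $\mu$-matrix has the rigid shape $\mu(c,d,e,f) = \begin{pmatrix} 0 & -e^{-1}d \\ 1 & f^{-1}c \end{pmatrix}$, the product of three such matrices is an explicit (if bulky) $2\times 2$ matrix whose entries are words in the given elements and their inverses; the same is true for the product of two $\mu$-matrices on the right-hand side, with the more complicated arguments plugged in. First I would record the general formula for a product of two $\mu$-matrices, namely
$$\mu(c,d,e,f)\mu(c',d',e',f') = \begin{pmatrix} -e^{-1}d & -e^{-1}df'^{-1}c' \\ f^{-1}c & -e'^{-1}d' + f^{-1}cf'^{-1}c'\end{pmatrix},$$
which already appears implicitly in the $m=2$ computation earlier in the paper. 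This lets me write the left-hand side (i) as (that $2\times 2$ matrix with the first two arguments) times the third factor $\mu(c_{i+1},d_{i+2},d_{i+1}',d_{i+1})$, and expand.

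Next I would expand the right-hand side (ii) using the same two-factor formula, with first argument $c = -d_{i-1}(d_i')^{-1}d_{i+1}+c_{i-1}d_i^{-1}c_i$, second argument $d = c_i$, third $e = d_{i-1}'$, fourth $f = d_{i-1}$ for the first $\mu$-matrix, and $c' = -d_i(d_{i+1}')^{-1}d_{i+2}+c_i d_{i+1}^{-1}c_{i+1}$, $d' = d_{i+2}$, $e' = d_{i+1}'d_i^{-1}c_i d_{i+1}^{-1}d_i'$, $f' = c_i$ for the second. Then I would compare the four entries of the two resulting matrices one at a time. The $(1,1)$-entry of (i) is $-d_{i-1}'^{-1}d_i\cdot 0 + (\text{stuff})\cdot 1$; more carefully, multiplying the two-factor product of the first two matrices of (i) by the third $\mu$-matrix and reading off entries, the $(1,1)$ and $(2,1)$ entries come from the first column of $\mu(c_{i+1},d_{i+2},d_{i+1}',d_{i+1})$, which is $\begin{pmatrix}0\\1\end{pmatrix}$, so these are just the second columns of the first product; for (ii) the analogous reduction happens. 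The key algebraic cancellations are the ones already seen in the $m=3$ example: when the compound arguments of (ii) are expanded, the $c_i d_{i+1}^{-1}$ and $d_i^{-1}c_i$ factors telescope against the inverses hidden in $e'^{-1} = d_i'^{-1}d_{i+1}c_i^{-1}d_i d_{i+1}'^{-1}$.

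The main obstacle is bookkeeping: one has to be careful that $e' = d_{i+1}'d_i^{-1}c_i d_{i+1}^{-1}d_i'$ is genuinely invertible (it is, since it is a product of invertibles, using that $c_i$ must be invertible here — this is why $c_i$ appears as the fourth argument $f'$ of the second matrix, and $\mu$ only requires $e,f$ invertible, so implicitly $c_i \in R^{\ast}$ is being assumed, matching the frieze-pattern context where all $c_{i,j}$ are invertible). Then $e'^{-1} = d_i'^{-1}d_{i+1}c_i^{-1}d_i d_{i+1}'^{-1}$, and in the $(2,2)$-entry of (ii) the term $-e'^{-1}d' = -d_i'^{-1}d_{i+1}c_i^{-1}d_i d_{i+1}'^{-1}d_{i+2}$ must combine with $f'^{-1}c' = c_i^{-1}(-d_i(d_{i+1}')^{-1}d_{i+2}+c_i d_{i+1}^{-1}c_{i+1})$ to reproduce the corresponding entry of (i). I expect that after substituting and cancelling, the remaining identity reduces to a tautology with no hypotheses needed beyond invertibility of the $d$'s, $d'$'s and $c_i$ — indeed the proposition is stated as a purely formal matrix identity, so no triangle relation is required, unlike in the $m=3$ quiddity-cycle example where the triangle relation came from demanding the product equal $-\mathrm{id}$. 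I would carry out the four entrywise comparisons, in the order $(2,1)$, $(1,1)$, then $(1,2)$, then $(2,2)$ (leaving the messiest, the $(2,2)$-entry, for last), and in each case substitute $e'^{-1}$ and simplify; the routine computation is omitted here but is straightforward given the two-factor formula above.
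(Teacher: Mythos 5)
Your proposal is correct and matches the paper's proof, which is simply the statement that the identity ``can be verified by a straightforward (though slightly tedious) computation''; your two-factor product formula and entrywise comparison carry this out, and the cancellation you predict in the $(2,2)$-entry (the term $(d_i')^{-1}d_{i+1}c_i^{-1}d_i(d_{i+1}')^{-1}d_{i+2}$ cancelling against $-(E')^{-1}d_{i+2}$) is exactly what happens. Your observation that $c_i$ must implicitly be invertible for the right-hand side to be defined is accurate and worth noting, since the proposition's wording lists $c_i$ only among the ``elements'' rather than the ``invertible elements''.
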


\begin{proof}
This can be verified by a straightforward (though slightly tedious) computation.
\end{proof}

\begin{Remark}
\begin{enumerate}
\item[{(1)}] We consider the special case of Proposition \ref{prop:qcreduce} where all boundary values
are equal to 1. Then the formula reads
$$\mu(c_{i-1},1,1,1)\mu(c_{i},1,1,1)
\mu(c_{i+1},1,1,1) = 
\mu(-1+c_{i-1}c_i,c_i,1,1)
\mu(-1+c_ic_{i+1},1,c_i,c_i).
$$
We see that some boundary entries become $c_i$ after the reduction, that is, we do not get a classic
frieze any more. However, when $c_i=1$ we get back the well-known reduction formula
$$\mu(c_{i-1},1,1,1)\mu(1,1,1,1)
\mu(c_{i+1},1,1,1) = 
\mu(-1+c_{i-1},1,1,1)
\mu(-1+c_{i+1},1,1,1)
$$
(inserting/deleting a 1 in the quiddity cycle) which underlies the classic bijection between 
frieze patterns over natural numbers and triangulations of polygons by Conway and Coxeter.
\item[{(2)}] Suppose we have a noncommutative frieze pattern (on some $m$-gon) 
$\mathcal{C}=(c_{i,j})$. 
Then Corollary \ref{cor:consprop}\,(b) states that the product of certain $\mu$-matrices is the negative
of the identity matrix. Let us consider a subproduct of three of these $\mu$-matrices, that is,
$$\mu(c_{r-2,r},c_{r-1,r},c_{r-1,r-2},c_{r-2,r-1})\mu(c_{r-1,r+1},c_{r,r+1},c_{r,r-1},c_{r-1,r})
\mu(c_{r,r+2},c_{r+1,r+2},c_{r+1,r},c_{r,r+1}).
$$
This corresponds to the part of the noncommutative polygon as in Figure \ref{fig:5vertices}.
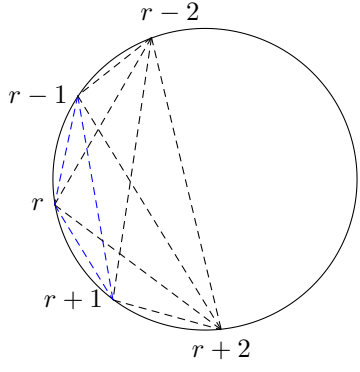
\begin{figure}
  \begin{tikzpicture}[auto]
    \node[name=s, draw, shape=regular polygon, regular polygon sides=500, minimum size=4cm] {};
   
    \draw[densely dashed] (s.corner 30) to (s.corner 80);
    \draw[densely dashed] (s.corner 30) to (s.corner 140);
    \draw[densely dashed] (s.corner 30) to (s.corner 200);
    \draw[densely dashed] (s.corner 30) to (s.corner 260);
     \draw[blue,densely dashed] (s.corner 80) to (s.corner 140);
     \draw[blue,densely dashed] (s.corner 80) to (s.corner 200);
     \draw[densely dashed] (s.corner 80) to (s.corner 260);
     \draw[blue,densely dashed] (s.corner 140) to (s.corner 200);
     \draw[densely dashed] (s.corner 140) to (s.corner 260);
     \draw[densely dashed] (s.corner 200) to (s.corner 260);

     \draw[shift=(s.corner 20)]  node[above]  {{\small $r-2$}};
    \draw[shift=(s.corner 80)]  node[left]  {{\small $r-1$}};
    \draw[shift=(s.corner 140)]  node[left]  {{\small $r$}};
    \draw[shift=(s.corner 200)]  node[left]  {{\small $r+1$}};
 \draw[shift=(s.corner 260)]  node[below]  {{\small $r+2$}};
 
   \end{tikzpicture}
   \caption{A subpolygon as part of a noncommutative quiddity cycle. \label{fig:5vertices}}
\end{figure}
The formula in Proposition \ref{prop:qcreduce} states that the above triple product of $\mu$-matrices 
is equal to the double product
\begin{equation} \label{eq:doubleprod}
\mu({\red -c_{r-2,r-1}c_{r,r-1}^{-1}c_{r,r+1}+c_{r-2,r}c_{r-1,r}^{-1}c_{r-1,r+1}}, 
c_{r-1,r+1},c_{r-1,r-2}, c_{r-2,r-1}) \cdot
\end{equation}
$$ \mu({\red -c_{r-1,r}c_{r+1,r}^{-1}c_{r+1,r+2}+c_{r-1,r+1}c_{r,r+1}^{-1}c_{r,r+2}}, 
c_{r+1,r+2},{\blue c_{r+1,r}c_{r-1,r}^{-1}c_{r-1,r+1}}c_{r,r+1}^{-1}c_{r,r-1}, c_{r-1,r+1}).
$$
The entries are part of a noncommutative frieze pattern, in particular we can apply the triangle relation
to the product highlighted in blue (corresponding to the blue triangle in Figure \ref{fig:5vertices}). 
Then we get 
$${\blue c_{r+1,r}c_{r-1,r}^{-1}c_{r-1,r+1}}c_{r,r+1}^{-1}c_{r,r-1}
= c_{r+1,r-1}c_{r,r-1}^{-1}c_{r,r+1}c_{r,r+1}^{-1}c_{r,r-1} = c_{r+1,r-1}.
$$
Moreover, the expressions highlighted in red in equation (\ref{eq:doubleprod}) can be shown to be equal
to $c_{r-2,r+1}$ and $c_{r-1,r+2}$, respectively (by using 
the noncommutative exchange relations for the quadrangles with vertices $r-2,r-1,r,r+1$
and $r-1,r,r+1,r+2$, respectively, and some triangle relations). 
Altogether, we obtain that the product in equation (\ref{eq:doubleprod})
has the form
$$\mu(c_{r-2,r+1},c_{r-1,r+1},c_{r-1,r-2}, c_{r-2,r-1}) \mu(c_{r-1,r+2},c_{r+1,r+2},c_{r+1,r-1},c_{r-1,r+1}).
$$
Note that this is precisely the part of the product as in Corollary \ref{cor:consprop} for the 
noncommutative polygon as in Figure \ref{fig:4vertices}, that is,
where the blue triangle in Figure \ref{fig:5vertices} has been cut off. 
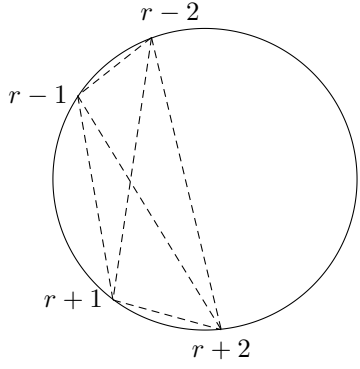
\begin{figure}
  \begin{tikzpicture}[auto]
    \node[name=s, draw, shape=regular polygon, regular polygon sides=500, minimum size=4cm] {};
   
    \draw[densely dashed] (s.corner 30) to (s.corner 80);
    \draw[densely dashed] (s.corner 30) to (s.corner 200);
    \draw[densely dashed] (s.corner 30) to (s.corner 260);
     \draw[densely dashed] (s.corner 80) to (s.corner 200);
     \draw[densely dashed] (s.corner 80) to (s.corner 260);
     \draw[densely dashed] (s.corner 200) to (s.corner 260);

     \draw[shift=(s.corner 20)]  node[above]  {{\small $r-2$}};
    \draw[shift=(s.corner 80)]  node[left]  {{\small $r-1$}};
    \draw[shift=(s.corner 200)]  node[left]  {{\small $r+1$}};
 \draw[shift=(s.corner 260)]  node[below]  {{\small $r+2$}};
 
   \end{tikzpicture}
   \caption{The subpolygon after reducing a noncommutative quiddity cycle. \label{fig:4vertices}}
\end{figure}
This shows that the reduction rule in Proposition \ref{prop:qcreduce} corresponds to the geometric
construction of cutting one vertex in a noncommutative polygon. 
\end{enumerate}
\end{Remark}

\section{Local relations imply all relations for noncommutative friezes}
\label{sec:localimplyall}

Our main result in this section is the following. 

\begin{Theorem} \label{thm:localimplyall}
Let $c:\mathrm{diag}(\mathcal{P})\to R^{\ast}$ be a noncommutative frieze on a polygon $\mathcal{P}$
over a ring $R$. Then $c$ satisfies all triangle relations and all noncommutative exchange relations.
\end{Theorem}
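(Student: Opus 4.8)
\textbf{Proof strategy for Theorem \ref{thm:localimplyall}.}

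The plan is to bootstrap from the local relations to all relations in two stages: first establish \emph{all} triangle relations, then establish \emph{all} noncommutative exchange relations, using the triangle relations as scaffolding. The key tool in both stages is Lemma \ref{lem_relations_step}, which lets one trade an exchange relation on one quadrangle for an exchange relation on an adjacent quadrangle, provided certain ``smaller'' relations (two exchange relations and two triangle relations) are already known. Proposition \ref{exrel_3} is the prototype: it already upgrades local exchange relations to exchange relations for quadrangles of the special shape $i+1,i+2,j,i$, and the inductive mechanism there is exactly the pattern I expect to iterate.

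\emph{Stage 1 (triangle relations).} Lemma \ref{lem:localllocex} already gives the triangle relations for all triangles with two \emph{consecutive} vertices (the ``weak local'' ones). I would now induct to remove this restriction. Given an arbitrary triangle with vertices $i,j,k$ (say $i<j<k$ cyclically), the idea is to induct on, e.g., the distance from $i$ to $j$: if $j=i+1$ we are in the weak-local case; otherwise, use a noncommutative exchange relation on a suitable quadrangle $i,i+1,j,k$ (available from Proposition \ref{exrel_3} and its consequences, since one of its diagonals is of the special form) together with triangle relations for triangles with strictly smaller ``spread'' (induction hypothesis) to deduce the triangle relation for $i,j,k$. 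Concretely, the computation in Remark \ref{rem:relations}(3) shows how triangle relations convert between the four exchange relations of a quadrangle, and running that conversion in reverse, fed by the already-known relations, produces the new triangle relation. By Remark \ref{rem:relations}(2) it suffices to verify one triangle relation per triangle, which keeps the bookkeeping manageable.

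\emph{Stage 2 (exchange relations).} With all triangle relations in hand, Remark \ref{rem:relations}(3)--(4) tells us that for each quadrangle it is enough to establish \emph{one} of its four exchange relations; the other three then follow. So fix a quadrangle with vertices $i<j<k<\ell$ and induct on its ``complexity'', measured by how far its vertices are from being four consecutive vertices of $\mathcal{P}$ (the base case being the local quadrangles $i,i+1,k,k+1$, where the relation is assumed). For the inductive step, pick an edge of $\mathcal{P}$ incident to one of the four vertices, say the edge $(i,i+1)$, forming a ``flip'' of quadrangles: apply Lemma \ref{lem_relations_step} to an appropriate cycle of five indices $i+1,i,\ell,k,j$ (or a cyclic variant), whose hypotheses (i)--(iii) are either local/weak-local or hold by the induction hypothesis because the quadrangles and triangles involved are strictly simpler. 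The lemma then shows the exchange relation for our target quadrangle is equivalent to one for a simpler quadrangle, which holds by induction. This is precisely the move used in the proof of Proposition \ref{exrel_3}, now applied in greater generality.

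\emph{Main obstacle.} The real work is organizing the double induction so that every invocation of Lemma \ref{lem_relations_step} or of the Remark \ref{rem:relations}(3) conversions only ever references relations that are \emph{strictly smaller} in the chosen well-ordering — i.e., choosing the right complexity measure on triangles and quadrangles and verifying that the five-index cycle one feeds into Lemma \ref{lem_relations_step} always has its auxiliary triangles and quadrangles strictly below the target. Cyclic indexing modulo $m$ and the various degenerate positions (vertices adjacent on the polygon, quadrangles sharing an edge of $\mathcal{P}$) require care, but no new idea beyond the inductive scheme already exhibited in Lemma \ref{lem:localllocex} and Proposition \ref{exrel_3}. The individual algebraic identities are routine manipulations with invertible elements of the type already displayed in Remark \ref{rem:relations} and the proof of Theorem \ref{prop:propagation}.
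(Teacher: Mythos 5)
Your overall architecture (first all triangle relations, then all exchange relations, with Lemma \ref{lem_relations_step} and Proposition \ref{exrel_3} as the inductive engine) matches the paper's at the top level, and your Stage 2 is close in spirit to the paper's final lemma. The genuine gap is in Stage 1 and in the input it needs. To produce the triangle relation for a general triangle $i,j,k$ you invoke exchange relations on a quadrangle such as $i,i+1,j,k$; but such a quadrangle has only \emph{two} consecutive vertices, so its exchange relations are supplied neither by the local relations nor by Proposition \ref{exrel_3}, whose quadrangles $i+1,i+2,j,i$ have three consecutive vertices. Reaching them by iterating Lemma \ref{lem_relations_step} does not obviously work either, because hypothesis (iii) of that lemma demands triangle relations that are in general not weak-local --- exactly the relations you are still trying to prove. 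As written, your double induction is circular: new triangle relations require non-local exchange relations, and the flip lemma that would produce those exchange relations requires non-local triangle relations as hypotheses. The appeal to ``running Remark \ref{rem:relations}(3) in reverse'' does not resolve this, since that remark converts between exchange relations of one quadrangle \emph{assuming} all triangle relations inside it.

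The paper breaks the circularity with a tool your proposal never mentions: the propagation matrices $M_{r,s}$ of Corollary \ref{cor:consprop}, i.e.\ products of the local $\mu$-matrices, whose entries are expressed in terms of the $c_{i,j}$ using only local exchange and weak-local triangle relations. Since $M_{i,k-1}=M_{i,j-1}M_{j,k-1}$ holds by definition, comparing matrix entries yields the non-local exchange relations for all quadrangles $(i,j-1,j,k)$ with one consecutive pair (Lemma \ref{lem:weakexchange}); a second identity, $-\mathrm{id}\cdot M_{k,i-1}=M_{k,j-1}\cdot M_{j,i-1}$, combined with those exchange relations then gives \emph{all} triangle relations (Lemma \ref{lem:alltriangles}). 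Only after that does the induction you sketch in Stage 2 go through (the paper runs it by fixing $i,j,\ell$ and inducting on $k$, feeding in the quadrangle $(i,k-1,k,\ell)$ from Lemma \ref{lem:weakexchange}). So the missing idea is not the bookkeeping of a well-ordering, as your ``main obstacle'' paragraph suggests, but the use of the $\mu$-matrix products to manufacture the non-local relations that seed both inductions.
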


The proof of Theorem \ref{thm:localimplyall} will consist of several steps provided by a series of 
lemmas. From now on, we fix the following assumptions. Let $R$ be a ring and 
$\mathcal{P}$ be an $m$-gon (where $m\ge 3$). Suppose that to each pair of vertices $i,j$ of
$\mathcal{P}$ invertible elements, values 
$c_{i,j}\in R^{\ast}$ and $c_{j,i}\in R^{\ast}$ are assigned, that is, we have a map 
$c:\mathrm{diag(\mathcal{P})}\to R^{\ast}$.

\begin{Lemma} \label{lem:weakexchange}
If $c$ satisfies all weak local triangle relations and all local noncommutative exchange relations
then it also satisfies the noncommutative exchange relations in quadrangles of the form $(i,j-1,j,k)$ for
the diagonals $(i,j)$ and $(j-1,k)$.  
\end{Lemma}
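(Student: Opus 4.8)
The strategy is to run an induction on the ``width'' of the quadrangle $(i,j-1,j,k)$ — that is, on the distance from $k$ to the edge $(j-1,j)$ — mirroring the structure of the proof of Lemma \ref{lem:localllocex}, but now for noncommutative exchange relations instead of triangle relations. For the base case, when the quadrangle $(i,j-1,j,k)$ has $k$ adjacent to $j$ so that all four vertices are ``consecutive'' in the sense of Definition \ref{def:nclocal}, the required exchange relations for $(i,j)$ and $(j-1,k)$ are literally among the local noncommutative exchange relations, hence hold by hypothesis. For the inductive step, I would increase $k$ by one and seek to derive the exchange relation for $(i,j)$ in the quadrangle $(i,j-1,j,k+1)$ from: the exchange relations already known for the narrower quadrangles $(i,j-1,j,k)$ and $(j,k,k+1)$-type configurations (inductive hypothesis), together with the weak local triangle relations available for triangles with two consecutive vertices, e.g.\ $(j-1,j,k)$, $(j-1,j,k+1)$, and triangles spanned by $k,k+1$ and a fourth vertex.

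\textbf{Key steps.}
First I would write down the exchange relation for $(i,j)$ in $(i,j-1,j,k+1)$ that we want, namely something of the form $c_{i,j} = c_{i,j-1}c_{k+1,j-1}^{-1}c_{k+1,j} + c_{i,k+1}c_{j-1,k+1}^{-1}c_{j-1,j}$ (up to relabelling the four vertices consistently with the figure). Second, I would use the local noncommutative exchange relation for the quadrangle $(j-1,j,k,k+1)$ — which is a genuine local relation — to rewrite the ``new'' terms involving $c_{\bullet,k+1}$ in terms of $c_{\bullet,k}$ and short boundary-type factors. Third, I would substitute the inductive hypothesis: the exchange relation for $(i,j)$ in the narrower quadrangle $(i,j-1,j,k)$, which expresses $c_{i,j}$ via $c_{i,j-1},c_{i,k}$ and factors supported on the triangle $(j-1,j,k)$. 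Fourth, I would clean up the resulting identity using the weak local triangle relations (available by Lemma \ref{lem:localllocex}, since they are triangles with two consecutive vertices) for the triangles $(j-1,j,k)$, $(j-1,j,k+1)$ and for triangles involving the edge $(k,k+1)$; these should make the cross terms cancel in pairs, exactly as in the telescoping computation at the end of the proof of Lemma \ref{lem:localllocex}. The claim for $(j-1,k)$ would then follow either by the symmetric computation (swapping the roles of the two ``ends'' $i$ and the $(j-1,j)$ edge) or — more cheaply — by first establishing enough triangle relations inside the quadrangle and invoking Remark \ref{rem:relations}\,(3)/(4); but since here we only have \emph{weak} local triangle relations, I expect we must do the symmetric computation by hand rather than appeal to that remark.

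\textbf{Main obstacle.}
The delicate point is bookkeeping: in the noncommutative setting one cannot freely reorder factors, so every substitution must be multiplied in on the correct side, and the cancellations in step four only go through if the weak local triangle relations are applied to precisely the right triangles and in the right (possibly inverted, cf.\ Remark \ref{rem:relations}\,(1)) form. The genuinely hard part is therefore verifying that, after substituting the inductive hypothesis and the local exchange relation for $(j-1,j,k,k+1)$, the remaining terms actually match up so that the two sides of the desired $(i,j)$-exchange relation coincide — i.e.\ that the triangle relations available (only the weak local ones, not all of them) suffice to force the identity. A secondary subtlety is making sure the induction is set up so that \emph{both} exchange relations, for $(i,j)$ and for $(j-1,k)$, are carried along simultaneously, since each step may need the other end's relation from the previous stage. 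Once the algebra of step four is organised as a telescoping cancellation, the proof should close; I would expect the write-up to consist mostly of one displayed chain of equalities with each link annotated by which local relation or inductive hypothesis is used, much as in Lemma \ref{lem:localllocex}.
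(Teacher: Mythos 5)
There is a genuine gap, and it starts at the base case. Your base case is the quadrangle $(i,j-1,j,j+1)$ with $i$ arbitrary, and you claim its exchange relations are ``literally among the local noncommutative exchange relations.'' They are not: Definition \ref{def:nclocal}\,(b) only covers quadrangles of the form $a,a+1,b,b+1$ with \emph{two} disjoint pairs of consecutive vertices, whereas $(i,j-1,j,j+1)$ has three consecutive vertices and one arbitrary one. Getting even this case already requires a separate induction (this is exactly Proposition \ref{exrel_3}, proved via Lemma \ref{lem_relations_step}), and that proposition only delivers the relation for the diagonal joining the arbitrary vertex to the \emph{middle} of the three consecutive ones --- so the relation for the other diagonal $(j-1,j+1)$ of your base quadrangle is not yet available either. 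In effect your single induction on $k$ leaves the second free vertex $i$ untreated; you would need a double induction (or to quote Proposition \ref{exrel_3} as the base), and you would need to check at each stage that the particular triangle and exchange relations fed into the Lemma \ref{lem_relations_step}-type equivalence have already been established --- this careful ordering is the actual content of the argument, and your write-up defers it (``the remaining terms \emph{should} match up'') rather than carrying it out.

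For comparison, the paper sidesteps the telescoping computation entirely: it invokes Corollary \ref{cor:consprop}, which writes the matrix $M_{r,s}=\prod_{t=r}^{s}\mu(\cdots)$ explicitly in terms of the frieze entries, and then reads off the identity
\[
-c_{i,i-1}^{-1}c_{i,k} \;=\; c_{i,i-1}^{-1}c_{i,j-1}c_{j,j-1}^{-1}c_{j,k} - c_{i,i-1}^{-1}c_{i,j}c_{j-1,j}^{-1}c_{j-1,k}
\]
by comparing the top-right entries of $M_{i,k-1}=M_{i,j-1}M_{j,k-1}$; the factorization is free because $M_{i,k-1}$ is defined as a product, so associativity does the induction for you. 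Both target exchange relations (for $(i,j)$ and for $(j-1,k)$) then fall out of this one identity by multiplying on appropriate sides and applying a weak local triangle relation. You are right on one point: since only weak local triangle relations are available, Remark \ref{rem:relations}\,(3) cannot be used to pass between the two diagonals, and they must be derived separately --- the paper does exactly that, but from the single displayed identity rather than from two parallel inductions. To salvage your approach you would either have to quote the $\mu$-matrix machinery (at which point you have reproduced the paper's proof) or actually execute the double induction with all cancellations verified.
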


\begin{proof}
We consider four vertices $i<j-1<j<k$. We know from Corollary \ref{cor:consprop} that
for every $r,s$ we have
$$
M_{r,s} = \begin{pmatrix} -c_{r,r-1}^{-1} c_{r,s} & -c_{r,r-1}^{-1} c_{r,s+1} \\
c_{r-1,r}^{-1} c_{r-1,s} & c_{r-1,r}^{-1}c_{r-1,s+1} \end{pmatrix}.
$$

It is important to note here that in the proofs of Corollary \ref{cor:consprop} and Theorem 
\ref{prop:propagation} and Proposition \ref{exrel_3}
on which it builds, only local noncommutative exchange relations and weak local
triangle relations have been used. This means that with the assumptions of the lemma which we are about to
prove it is indeed allowed to use Corollary \ref{cor:consprop}.

We have $M_{i,k-1} = M_{i,j-1} M_{j,k-1}$ (by definition of these matrices). Comparing the top right
entries gives
\begin{equation} \label{eq:ij-1jk}
-c_{i,i-1}^{-1} c_{i,k} = c_{i,i-1}^{-1}c_{i,j-1}c_{j,j-1}^{-1} c_{j,k} - 
c_{i,i-1}^{-1}c_{i,j}c_{j-1,j}^{-1}c_{j-1,k}.
\end{equation}
Upon multiplication with $-c_{i,j-1}^{-1}c_{i,i-1}$ from the left and $c_{j-1,k}^{-1}$ from the right this becomes 
$$c_{i,j-1}^{-1}c_{i,k}c_{j-1,k}^{-1} = - c_{j,j-1}^{-1}c_{j,k}c_{j-1,k}^{-1} + c_{i,j-1}^{-1}c_{i,j}c_{j-1,j}^{-1}.
$$
We apply a weak local triangle relation to first summand on the right and multiply by $c_{i,j-1}$ from the left
and $c_{j-1,j}$ from the right to obtain
\begin{equation} \label{eq:weakexchangecij}
c_{i,k}c_{j-1,k}^{-1}c_{j-1,j} = - c_{i,j-1}c_{k,j-1}^{-1}c_{k,j} + c_{i,j}.
\end{equation}
This is the noncommutative exchange relation in the quadrangle $(i,j-1,j,k)$ for the diagonal $(i,j)$, as claimed. 

For proving the noncommutative exchange relation for the diagonal $(j-1,k)$ we multiply equation
(\ref{eq:ij-1jk}) with $-c_{j-1,j}c_{i,j}^{-1}c_{i,i-1}$ and obtain
$$c_{j-1,j}c_{i,j}^{-1}c_{i,k} = - c_{j-1,j}c_{i,j}^{-1}c_{i,j-1}c_{j,j-1}^{-1}c_{j,k} + c_{j-1,k}.
$$
We apply a weak local triangle relation to the first factors of the first summand on the right and get
$$c_{j-1,j} c_{i,j}^{-1}c_{i,k}  + c_{j-1,i}c_{j,i}^{-1}c_{j,k} = c_{j-1,k}
$$
and this is the desired noncommutative exchange relation for the diagonal $(j-1,k)$.
\end{proof}

As a main step towards proving Theorem \ref{thm:localimplyall} we can now show how to obtain
all triangle relations. 

\begin{Lemma} \label{lem:alltriangles}
If $c$ satisfies all local triangle relations and all local noncommutative exchange relations
then it satisfies all triangle relations. 
\end{Lemma}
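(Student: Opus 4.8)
The plan is to reduce an arbitrary triangle relation, for a triangle with vertices $i,j,k$, to the case of a triangle with two consecutive vertices, which is already handled by the weak local triangle relations (via Lemma~\ref{lem:localllocex}). First I would fix a triangle with three arbitrary vertices and argue by a double induction: an outer induction on the distance between two of the vertices, say $i$ and $j$ (with $i<j$), and within that an inner argument that moves the third vertex $k$ into position, or rather that replaces the pair $(i,j)$ by a pair that is ``closer'' to being consecutive. The base case is when $j=i+1$, i.e.\ the triangle has two consecutive vertices; this is exactly the weak local triangle relation, already available.

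For the inductive step I would take a triangle $i,j,k$ with $j-i\ge 2$ and introduce the intermediate vertex $j-1$, forming the quadrangle with vertices $i,j-1,j,k$. The key tool is Lemma~\ref{lem:weakexchange}, which (under the hypotheses of the present lemma) supplies the noncommutative exchange relations in this quadrangle for the diagonals $(i,j)$ and $(j-1,k)$; concretely, equations of the shape $c_{i,j} = c_{i,j-1}c_{k,j-1}^{-1}c_{k,j} + c_{i,k}c_{j-1,k}^{-1}c_{j-1,j}$ and the analogous one for $(j-1,k)$. I would then combine these two exchange relations with the triangle relations for the ``smaller'' triangles $i,j-1,k$ and $i,j-1,j$ (respectively $j-1,j,k$), which are available by the outer induction hypothesis since in each of these the relevant index gap is strictly smaller than $j-i$. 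Algebraically, one substitutes the exchange expression for $c_{i,j}$ (or its inverse, in the form of Remark~\ref{rem:relations}(1)) into the target triangle relation $c_{i,j}c_{k,j}^{-1}c_{k,i} = c_{i,k}c_{j,k}^{-1}c_{j,i}$ and rewrites the resulting products using the two smaller triangle relations until both sides collapse to a common expression; it should be structurally very similar to the computation in the proof of Lemma~\ref{lem:localllocex}, just with the edge $(i,i+1)$ replaced by a general edge $(i,j-1)$ whose triangle relations are already known by induction.

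I expect the main obstacle to be organizing the induction correctly: one must be careful that \emph{every} auxiliary triangle invoked (the ones on vertex triples $\{i,j-1,k\}$, $\{i,j-1,j\}$, $\{j-1,j,k\}$) genuinely has a smaller relevant parameter than $\{i,j,k\}$, so that the recursion is well-founded and does not circle back. The cleanest way is probably to induct on $j-i$ (taken with $i<j$ in cyclic order, and $k$ arbitrary outside $\{i,j\}$), noting that $\{i,j-1,k\}$ and $\{i,j-1,j\}$ both involve the gap $j-1-i<j-i$, and that the triangle $\{j-1,j,k\}$ has the consecutive pair $j-1,j$ and is thus a weak local relation handled by the base case. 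The bookkeeping of which of the four equivalent forms of each triangle relation (Remark~\ref{rem:relations}(2)) and which of the four exchange relations (the last three displays of Definition~\ref{def:nclocal}(b)) to use at each step is the delicate part, but it is routine noncommutative algebra once the inductive scaffolding is fixed; the actual chain of equalities I would present in the same style as the displayed \texttt{eqnarray*} in the proof of Lemma~\ref{lem:localllocex}.
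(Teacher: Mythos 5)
Your plan is correct, but it takes a genuinely different route from the paper. The paper's proof of this lemma is \emph{not} an induction over triangles: for an arbitrary triangle $i,j-1,k$ it compares the top-right entries of the matrix identity $-\mathrm{id}\cdot M_{k,i-1}=M_{k,j-1}\cdot M_{j,i-1}$ supplied by Corollary~\ref{cor:consprop}, which produces in one stroke a long-range three-term relation among $c_{k,i}$, $c_{k,j-1}c_{j,j-1}^{-1}c_{j,i}$ and $c_{k,j}c_{j-1,j}^{-1}c_{j-1,i}$; a weak local triangle relation and the relation (\ref{eq:weakexchangecij}) from Lemma~\ref{lem:weakexchange} then turn this directly into the triangle relation. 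Your induction on the gap $j-i$, peeling off the vertex $j$ via the quadrangle $i,j-1,j,k$, works entirely at the level of relations and uses Corollary~\ref{cor:consprop} only implicitly through Lemma~\ref{lem:weakexchange}; it is, as you say, the argument of Lemma~\ref{lem:localllocex} run one level up, at the price of the inductive bookkeeping you anticipate (which you set up correctly: with $i<j<k$ and induction on $j-i$, the auxiliary triangles $\{i,j-1,k\}$ and $\{i,j-1,j\}$ have strictly smaller gap and $\{j-1,j,k\}$ is weak local). One wrinkle in the ``routine algebra'' deserves explicit mention. Lemma~\ref{lem:weakexchange} hands you only the exchange relations for the diagonals $(i,j)$ and $(j-1,k)$ of the quadrangle $i,j-1,j,k$, while the cleanest substitution chain wants the one for $(j,i)$ (or $(k,j-1)$); you must first derive it as in Remark~\ref{rem:relations}(3), which is legitimate here because all three triangle relations inside that quadrangle are already available (two by induction, one weak local). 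Alternatively, using only the two given exchange relations one arrives at an identity $X=uX$ for the defect $X=c_{i,j}c_{k,j}^{-1}c_{k,i}-c_{i,k}c_{j,k}^{-1}c_{j,i}$ with $u=c_{i,k}c_{j-1,k}^{-1}c_{j-1,j}c_{i,j}^{-1}$, and must note that $1-u=c_{i,j-1}c_{k,j-1}^{-1}c_{k,j}c_{i,j}^{-1}$ is invertible by the exchange relation for $(i,j)$, whence $X=0$. Either fix is short, so this is a presentational rather than a mathematical gap; in exchange your argument is somewhat more self-contained than the paper's, which invokes the $\mu$-matrix machinery a second time.
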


\begin{proof}
From Lemma \ref{lem:localllocex} we know that the map $c$ satisfies all weak local triangle relations. 
Then Lemma \ref{lem:weakexchange} implies that $c$ satisfies certain non-local exchange relations,
namely in quadrangles with two consecutive vertices for two of the directed diagonals.   

Following the notation as in Lemma \ref{lem:weakexchange}, we consider an arbitrary triangle with
vertices $i,j-1,k$. Again we use the matrices $M_{r,s}$ and Corollary \ref{cor:consprop}. We consider 
the following products (the equalities follow from the definition of the matrices $M_{r,s}$ and
Corollary \ref{cor:consprop}):
$$-\mathrm{id}\cdot M_{k,i-1} = M_{k,k-1}\cdot M_{k,i-1} = M_{k,j-1}\cdot M_{j,k-1}\cdot M_{k,i-1}
= M_{k,j-1}\cdot M_{j,i-1}.
$$
Comparing the top right entries of the matrices on the left and the right we get
$$c_{k,k-1}^{-1} c_{k,i} = c_{k,k-1}^{-1}c_{k,j-1}c_{j,j-1}^{-1}c_{j,i} - 
c_{k,k-1}^{-1}c_{k,j}c_{j-1,j}^{-1} c_{j-1,i}.
$$
When we multiply this equation from the left with $c_{k,j-1}^{-1}c_{k,k-1}$ and from the right with $c_{j-1,i}^{-1}$
we obtain
$$c_{k,j-1}^{-1} c_{k,i}c_{j-1,i}^{-1} = c_{j,j-1}^{-1}c_{j,i}c_{j-1,i}^{-1} - c_{k,j-1}^{-1}c_{k,j}c_{j-1,j}^{-1}.
$$
Using a weak local triangle relation for the first summand on the right yields
$$c_{k,j-1}^{-1} c_{k,i}c_{j-1,i}^{-1} = c_{i,j-1}^{-1}c_{i,j}c_{j-1,j}^{-1} - c_{k,j-1}^{-1}c_{k,j}c_{j-1,j}^{-1}.
$$
We can now insert the noncommutative exchange relation for $c_{i,j}$ from Lemma \ref{lem:weakexchange},
i.e.\ we use equation (\ref{eq:weakexchangecij}) and get
\begin{eqnarray*}
c_{k,j-1}^{-1} c_{k,i}c_{j-1,i}^{-1} & = & c_{i,j-1}^{-1} (c_{i,j-1}c_{k,j-1}^{-1}c_{k,j}
 + c_{i,k}c_{j-1,k}^{-1}c_{j-1,j})c_{j-1,j}^{-1} - c_{k,j-1}^{-1}c_{k,j}c_{j-1,j}^{-1} \\
 & = & c_{k,j-1}^{-1}c_{k,j}c_{j-1,j}^{-1} + c_{i,j-1}^{-1}c_{i,k}c_{j-1,k}^{-1}  
 - c_{k,j-1}^{-1}c_{k,j}c_{j-1,j}^{-1} \\
 & = & c_{i,j-1}^{-1}c_{i,k}c_{j-1,k}^{-1}. 
\end{eqnarray*}
Taking inverses we get
$$c_{j-1,i} c_{k,i}^{-1} c_{k,j-1} = c_{j-1,k} c_{i,k}^{-1}c_{i,j-1}
$$
and this is a triangle relation for the (arbitrary) triangle $(i,j-1,k)$. Recall that by 
Remark \ref{rem:relations}\,(2) it suffices to show one of the triangle relations for any given triangle,
so the proof is complete. 
\end{proof}

The final step for proving Theorem \ref{thm:localimplyall} is to deduce all noncommutative exchange 
relations. 

\begin{Lemma} \label{lem:allexchange}
If $c$ satisfies all local triangle relations and all local noncommutative exchange relations
then it satisfies all noncommutative exchange relations. 
\end{Lemma}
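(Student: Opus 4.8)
The plan is to follow the same strategy that succeeded in Lemma~\ref{lem:alltriangles}, now leveraging the fact that by Lemma~\ref{lem:alltriangles} we already have \emph{all} triangle relations available (not just the weak local ones). Recall from Remark~\ref{rem:relations}(3)--(4) that once all triangle relations inside a quadrangle are known, any one of the four noncommutative exchange relations for that quadrangle implies the other three. So it suffices to establish, for each quadrangle with vertices $p<q<r<s$, just \emph{one} noncommutative exchange relation, say the one for the diagonal $(r,p)$ (or whichever is most convenient given the orientation conventions).

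First I would again use the matrix identity from Corollary~\ref{cor:consprop}, namely that $M_{i,k-1}=M_{i,j-1}M_{j,k-1}$ for the $2\times2$ matrices $M_{r,s}=\begin{pmatrix} -c_{r,r-1}^{-1}c_{r,s} & -c_{r,r-1}^{-1}c_{r,s+1} \\ c_{r-1,r}^{-1}c_{r-1,s} & c_{r-1,r}^{-1}c_{r-1,s+1}\end{pmatrix}$, which by the remark inside the proof of Lemma~\ref{lem:weakexchange} only depends on local exchange and weak local triangle relations. The key new ingredient is that we are no longer restricted to quadrangles with a pair of consecutive vertices: we want an exchange relation for a general quadrangle $p,q,r,s$. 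The natural move is to split a longer product of $\mu$-matrices at two interior points rather than one, i.e.\ write $M_{p,s-1}=M_{p,q-1}\,M_{q,r-1}\,M_{r,s-1}$, and then compare a suitable entry (e.g.\ the top-right entry) of both sides. This produces a relation among the $c_{p,\bullet}$, $c_{q,\bullet}$, $c_{r,\bullet}$, $c_{p-1,\bullet}$ etc.\ of exactly the shape of an exchange relation after multiplying through by appropriate invertible prefactors and postfactors, just as equation~(\ref{eq:ij-1jk}) was massaged into (\ref{eq:weakexchangecij}).

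The steps, in order: (1) fix a general quadrangle with vertices $p<q<r<s$ and write out the triple-product matrix identity $M_{p,s-1}=M_{p,q-1}M_{q,r-1}M_{r,s-1}$, or a two-factor split adapted to the diagonal one wants; (2) expand the relevant matrix entry of the right-hand side (this is where a short but genuine computation enters — multiplying two or three $2\times2$ matrices with noncommuting entries); (3) clear denominators by left- and right-multiplication with the appropriate $c_{\bullet,\bullet}$, using the already-established \emph{full} triangle relations (Lemma~\ref{lem:alltriangles}) to rewrite intermediate products $c_{a,b}c_{x,b}^{-1}c_{x,c}$ into the form needed — this is the analogue of the ``apply a weak local triangle relation'' steps, but now unrestricted; (4) recognize the resulting equation as the noncommutative exchange relation for one diagonal of the quadrangle $p,q,r,s$; (5) invoke Remark~\ref{rem:relations}(3) together with the full triangle relations to conclude that all four exchange relations for that quadrangle hold; (6) since $p,q,r,s$ was arbitrary, all noncommutative exchange relations follow, completing the proof of Theorem~\ref{thm:localimplyall}.

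The main obstacle I anticipate is bookkeeping in step~(2)--(3): getting the matrix-entry comparison to land \emph{exactly} on the exchange relation requires choosing the right split point(s) and the right entry, and then the cancellations in step~(3) must be driven by triangle relations applied to the correct triangles — it is easy to produce a valid but useless identity. A secondary subtlety is orientation: the quadrangle $p,q,r,s$ with $p<q<r<s$ has diagonals $(p,r)$ and $(q,s)$ (and their reverses), and one must make sure the exchange relation one derives is genuinely for a diagonal and not a boundary edge of the quadrangle; handling the degenerate/boundary cases $j=i$, $j=i+m$ cleanly is where the extended frieze pattern convention from Remark~\ref{rem:extended} will be needed. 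Once one correct exchange relation per quadrangle is in hand, the reduction to ``all four'' via Remark~\ref{rem:relations}(3) is purely formal and causes no difficulty.
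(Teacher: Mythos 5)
There is a genuine gap at the heart of your plan, in steps (2)--(4). The matrix identity $M_{p,s-1}=M_{p,q-1}M_{q,r-1}M_{r,s-1}$ cannot, by comparing any entry, yield the exchange relation for a general quadrangle $p<q<r<s$. The reason is structural: every entry of $M_{a,b}$ involves only the \emph{consecutive} rows $a$ and $a-1$, so when you expand a product split at $q$ and $r$, the resulting identity necessarily carries the auxiliary vertices $q-1$ and $r-1$ (and $p-1$ in the removable prefactor). Concretely, the top-right entry of the triple product gives a four-term identity in $c_{p,\bullet},c_{q,\bullet},c_{q-1,\bullet},c_{r,\bullet},c_{r-1,\bullet}$, whereas the target relation $c_{p,r}=c_{p,q}c_{s,q}^{-1}c_{s,r}+c_{p,s}c_{q,s}^{-1}c_{q,r}$ involves only the four vertices of the quadrangle. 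Triangle relations cannot close this gap, because a triangle relation only rewrites a product around a fixed triple of vertices and never eliminates a vertex from a relation; your step (3) therefore cannot ``rewrite into the form needed.'' What the matrix-splitting argument genuinely delivers is exactly Lemma~\ref{lem:weakexchange}: exchange relations in quadrangles containing a pair of cyclically consecutive vertices. This is the pitfall you yourself flag (``a valid but useless identity''), but your proposal does not resolve it.

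The missing ingredient is an induction that transports the exchange relation from such special quadrangles to arbitrary ones. The paper fixes $i,j,\ell$ and inducts on $k$: starting from the exchange relation for $(i,k)$ in the quadrangle $(i,k-1,k,\ell)$ (available from Lemma~\ref{lem:weakexchange} since $k-1,k$ are consecutive), it substitutes the induction hypothesis for $c_{i,k-1}$ in the quadrangle $(i,j,k-1,\ell)$, and then uses a triangle relation for $(j,k-1,\ell)$ together with the exchange relation for $(j,k)$ in $(j,k-1,k,\ell)$ to make the terms involving the auxiliary vertex $k-1$ cancel. Some such step --- iterating Lemma~\ref{lem_relations_step} or an equivalent induction --- is unavoidable, and without it your argument establishes only the consecutive-vertex case already covered by Lemma~\ref{lem:weakexchange}. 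Your preliminary reduction (using Lemma~\ref{lem:alltriangles} and Remark~\ref{rem:relations}(3) to show that one exchange relation per quadrangle suffices) is correct and matches the paper's use of rotational symmetry; the failure is only in how you propose to obtain that one relation.
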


\begin{proof}
We consider an arbitrary quadrangle, with vertices $i<j<k<\ell$. 
We have to show that the noncommutative exchange relations hold for this quadrangle. 
Note that there are four different relations, one for each directed diagonal of the quadrangle, but that by rotational 
symmetry it suffices to show the noncommutative exchange relation for only one of these 
diagonals.  We will show the noncommutative exchange relation for the diagonal $(i,k)$ inductively. To this end,
we fix the vertices $i,j,\ell$ and do induction on $k=j+1,j+2,\ldots$. As a base for the induction we consider the 
quadrangle $(i,j,j+1,\ell)$. The noncommutative exchange relation for the diagonal $(i,k)=(i,j+1)$ 
holds by Lemma \ref{lem:weakexchange}. Now suppose as induction hypothesis that the 
noncommutative exchange relation holds for the diagonal $(i,k-1)$, i.e.
\begin{equation} \label{eq:ih}
c_{i,k-1} = c_{i,j}c_{\ell,j}^{-1}c_{\ell,k-1}+c_{i,\ell}c_{j,\ell}^{-1}c_{j,k-1}.
\end{equation} 

We start with the noncommutative exchange relation for the diagonal $(i,k)$ in the quadrangle 
$(i,k-1,k,\ell)$, which holds by Lemma \ref{lem:weakexchange}, i.e.\ we have
$$c_{i,k} = c_{i,k-1}c_{\ell,k-1}^{-1}c_{\ell,k} + c_{i,\ell}c_{k-1,\ell}^{-1}c_{k-1,k}.
$$
Inserting the induction hypothesis (\ref{eq:ih}) we obtain
\begin{eqnarray*}
c_{i,k} & = & (c_{i,j}c_{\ell,j}^{-1}c_{\ell,k-1}+c_{i,\ell}c_{j,\ell}^{-1}c_{j,k-1})
 c_{\ell,k-1}^{-1}c_{\ell,k} + c_{i,\ell}c_{k-1,\ell}^{-1}c_{k-1,k} \\
 & = &c_{i,j}c_{\ell,j}^{-1}c_{\ell,k} + c_{i,\ell}c_{j,\ell}^{-1}c_{j,k-1}c_{\ell,k-1}^{-1}c_{\ell,k}
 + c_{i,\ell}c_{k-1,\ell}^{-1}c_{k-1,k}.
 \end{eqnarray*}
From Lemma \ref{lem:alltriangles} we know that the map $c$ satisfies all triangle relations. We use the triangle 
relation for the triangle $(j,k-1,\ell)$, i.e.\ 
$c_{j,\ell}c_{k-1,\ell}^{-1}c_{k-1,j}=c_{j,k-1}c_{\ell,k-1}^{-1}c_{\ell,j}$, 
 and insert it for $c_{k-1,\ell}^{-1}$ in the above equation:
\begin{eqnarray*}
c_{i,k} & = & c_{i,j}c_{\ell,j}^{-1}c_{\ell,k} + c_{i,\ell}c_{j,\ell}^{-1}c_{j,k-1}c_{\ell,k-1}^{-1}c_{\ell,k}
+ c_{i,\ell}c_{j,\ell}^{-1} c_{j,k-1}c_{\ell,k-1}^{-1}c_{\ell,j}c_{k-1,j}^{-1}c_{k-1,k} \\
& = & c_{i,j} c_{\ell,j}^{-1}c_{\ell,k} + c_{i,\ell}c_{j,\ell}^{-1}c_{j,k-1}c_{\ell,k-1}^{-1}
(c_{\ell,k} + c_{\ell,j}c_{k-1,j}^{-1}c_{k-1,k}).
\end{eqnarray*}
By Lemma \ref{lem:weakexchange} the noncommutative exchange relation for the diagonal $(j,k)$ in the
quadrangle $(j,k-1,k,\ell)$ holds; by elementary transformations this relation has the form
$$c_{\ell,k} = c_{\ell,k-1} c_{j,k-1}^{-1}c_{j,k} - c_{\ell,j}c_{k-1,j}^{-1}c_{k-1,k}.
$$ 
Inserting this into the above equation we get
$$c_{i,k} = c_{i,j}c_{\ell,j}^{-1}c_{\ell,k} + c_{i,\ell}c_{j,\ell}^{-1}c_{j,k-1}c_{\ell,k-1}^{-1}
c_{\ell,k-1} c_{j,k-1}^{-1}c_{j,k} = c_{i,j}c_{\ell,j}^{-1}c_{\ell,k} + c_{i,\ell} c_{j,\ell}^{-1}c_{j,k}.
$$
This is the noncommutative exchange relation for the diagonal $(i,k)$ in the quadrangle $(i,j,k,\ell)$
which had to be shown to complete the proof of the lemma.
\end{proof}

Finally, combining Lemmas 
\ref{lem:alltriangles} and \ref{lem:allexchange}
yields a proof of our main result Theorem \ref{thm:localimplyall}.


\end{document}